\numberwithin{equation}{section}
\theoremstyle{plain}
\newtheorem{theorem}{Theorem}[section]
\newtheorem{remark}[]{Remark}
\newtheorem{conjecture}[theorem]{Conjecture}
\newtheorem{corollary}[theorem]{Corollary}
\def\proof{\@ifnextchar[{\@oproof}{\@nproof}}
\def\@oproof[#1][#2]{\trivlist\item[\hskip\labelsep\textit{#2 Proof of\
		#1.}~]\ignorespaces}
\def\@nproof{\trivlist\item[\hskip\labelsep\textit{Proof.}~]\ignorespaces}
\newcommand{\seqnum}[1]{\href{https://oeis.org/#1}{\rm \underline{#1}}}
\pgfplotsset{compat=1.18}
\begin{document}
\title[Rook decomposition of the Partition function]{Rook decomposition of the Partition function}
\author{N. Guru Sharan }
\address{Department of Mathematics, Indian Institute of Technology Gandhinagar, Palaj, Gandhinagar, Gujarat-382355, India.}
\email{sharanguru5@gmail.com}
\thanks{2020 \textit{Mathematics Subject Classification.} Primary 05A17; Secondary 11P83, 11N56.\\
	\textit{Keywords and phrases.} Decomposition, rook numbers, Durfee triangle, partition function, asymptotics, periodicity.}

\begin{abstract}
The rook numbers are fairly well-studied in the literature. In this paper, we study the max-rook number of the Ferrers boards associated to integer partitions. We show its connections with the Durfee triangle of the partitions. The max-rook number gives a new decomposition of the partition function. We derive the generating functions of the partitions with the Durfee triangle of sizes $3$, $4$ and $5$. We obtain their exact formula and further use it to show the periodicity modulo $p$ for any $p \in \mathbb{N}$ and $p\geq2$. We also establish their parity and parity bias. We give the growth asymptotics of partitions with the Durfee triangle of sizes 3 and 4. We obtain a new rook analogue of the recurrence relation of the partition function.
\end{abstract}
\maketitle
\tableofcontents
\section{Definitions and Notations}

	A \textit{partition} of a non-negative integer $n$ is a non-increasing sequence of positive integers $\lambda_1, \lambda_2, \lambda_3, \ldots, \lambda_r$ such that $\sum_{i=0}^{r} \lambda_i=n$. The individual $\lambda_i$'s are themselves called the parts of the partition. For a fixed $n \in \mathbb{N}$, let $\mathscr{P}(n)$ be the set of all unrestricted partitions of $n$ and let $P(n)$ be the cardinality of $\mathscr{P}(n)$. For example, $(5,3,2,1)$ is a partition of $11$, enumerated by $P(11)$. For convention, we take the empty partition to be the only partition of zero, i.e. $P(0)=1$.

	For each partition $\pi \in \mathscr{P}(n)$, we associate a graphical representation called \emph{Ferrers diagram}, defined by the set of nodes with integer co-ordinates $(i,j)$ such that if $\pi = (\lambda_1, \lambda_2, \lambda_3, ... \ , \lambda_r)$, then the node $(i,j)$ is placed if and only if $-r+1 \leq i \leq 0$ and $0\leq j \leq \lambda_{|i|+1}-1$. For example, the Ferrers diagram of $(5,3,2,1)$ is given in Figure \ref{FigFerDia}.
	
	The largest square of nodes contained in the Ferrers diagram of a partition starting from the top-left node of the Ferrers diagram is called the \emph{Durfee square} and its length is called the \emph{size} of the Durfee square. 
	
	Similarly, let us call the largest right-angled isosceles triangle contained in the Ferrers diagram with the right-angle at the top-left node of the partition, the \emph{Durfee triangle}. The number of nodes on the sides of triangle is called the \emph{size} of the Durfee triangle. One such example of a Durfee square (size $2$) and a Durfee triangle (size $4$), of the partition $(5,3,2,1)$ is given in Figure \ref{FigFerDia}. Durfee triangle of size $n$ contains $\sum_{\ell=1}^{n} \ell = \frac{1}{2}n(n+1)$ many nodes, which is called the $n^{th}$ \emph{triangular number}, which we denote by $t_n$. The first few of them are $1,3,6,10,15, ....$
	
	Formally, a \emph{board} $B$ is any finite sub-lattice of $\mathbb{Z}\times\mathbb{Z}$, containing squares of unit dimension. In the context of this study, we define a \emph{Ferrers board} associated to a partition $\pi$ to be the board that is obtained by replacing each node in the Ferrers diagram of $\pi$ by a unit square. From the defintion of Ferrers Diagram, it is clear that Ferrers Board is a finite connected sub--lattice of $\mathbb{Z}\times\mathbb{Z}$. See Figure \ref{FigFerboa} for clarity.
	
	A  \emph{rook} is a chess piece, which is only allowed to move horizontally or vertically on a board. Two rooks are said to be \emph{non-intersecting} rooks if they are not placed in a same row or a same column, which makes the rooks unable to capture each other, in the game of chess.  In the classical theory of rook numbers, the primary interest is the number of distinct placements of non-intersecting rooks on a board. \emph{The rook number} $R(r,B)$ is the distinct number of ways of placing $r$ many non-intersecting rooks in the Ferrers board $B$. For any fixed board $B$, we define the \textit{rook polynomial} $f_{\textup{B}}(x)$ to be the generating function of the rook numbers $R(r,B)$, i.e.
	\begin{align*}
		f_{\textup{B}}(x):= \sum_{n=1}^{\infty} R(n,B) x^n.
	\end{align*}
	For any fixed board $B$, $R(n,B)$ vanishes for all large $n$, depending on the number of squares in the board, thus making $f_{\textup{B}}(x)$ a polynomial. This term was first coined by Kaplansky and Riordan \cite{KapRio}. The rook polynomial is well-studied. As shown in \cite[Chapter 7]{Riordan}, for a rectangular $m \times n$ board $B$, it is known that
	\begin{align*}
	\hspace{3.5 cm} 	f_{\textup{B}}(x) = n! x^n L_n^{(m-n)}(-x^{-1}), \hspace{2 cm} (m\geq n)
	\end{align*}
	where $L_n^{\alpha}(x)$, the \textit{generalized} \emph{Laguerre} \textit{polynomial}, is
	\begin{align*}
		L_n^{\alpha} (x):= \frac{1}{n!} x^{-\alpha} e^{x} \frac{d^n}{dx^n} \left( x^{n+\alpha} e^{-x} \right),
	\end{align*}
	as defined in \cite[Equation (6.23)]{temme}. The rook polynomial is nothing but a special case of the \emph{matching polynomial} studied in Graph theory, which is the generating function of the number of $k$-edge matchings in a graph. From the theory of matching polynomial, for any fixed board $B$ , it has been shown that the rook numbers $R(n,B)$ are unimodal, i.e. they grow to a maximal value and then decrease. This has been proved as an implication of the fact that all the zeroes of the matching polynomial are real \cite[Section 4]{HL}. The interested reader is referred to \cite[Chapter 2]{BHR} for finer details.

	\section{Introduction}

	We call it a \textit{decomposition of the partition function} when $P(n)$ can be written in terms of a finite sum of other number-theoretic functions. Various decompositions of the partition function have been studied in the literature. For instance, recently, in \cite{merca1} and \cite{merca2}, a decomposition of the partition function in terms of the Euler totient function and the M{\"o}bius function respectively, has been derived.
	
	In this paper, we study the highest possible rook number for a given Ferrers board (which is also the degree of the rook polynomial) and use it to get a new decomposition of the partition function in \eqref{PR_j}, which we call the \textit{rook decomposition}. For a fixed partition $\pi \in \mathscr{P}(n)$, if $B_{\pi}$ be the Ferrers board associated to $\pi$, let us define the \emph{max--rook number} $R(\pi)$ as follows:
	\begin{align*}
		R(\pi):=&\textup{ The maximal number of non-intersecting rooks that can be placed in } B_\pi.
	\end{align*}
	Clearly, the maximal number is unique and less than or equal to $n$, hence $R(\pi)$ is well defined. One example is illustrated in Figure \ref{FigFerboa}. 
\begin{figure}[h!]
		\centering
		\begin{subfigure}[t]{0.4\textwidth}
			\centering
		\begin{tikzpicture}[description/.style={fill=white,inner sep=2pt}]
			\useasboundingbox (-5.5,0.5) rectangle (-1,4);
			\scope[transform canvas={scale=0.8}]
			\filldraw[black] (-6,1.5) circle (3pt);
			\filldraw[black] (-6,2.5) circle (3pt);
			\filldraw[black] (-6,3.5) circle (3pt);
			\filldraw[black] (-6,4.5) circle (3pt);
			\filldraw[black] (-5,2.5) circle (3pt);
			\filldraw[black] (-5,3.5) circle (3pt);
			\filldraw[black] (-5,4.5) circle (3pt);
			\filldraw[black] (-4,3.5) circle (3pt);
			\filldraw[black] (-4,4.5) circle (3pt);
			\filldraw[black] (-3,4.5) circle (3pt);
			\filldraw[black] (-3,4.5) circle (3pt);
			\filldraw[black] (-2,4.5) circle (3pt);
			\draw (-6,4.5) -- (-5,4.5);
			\draw (-6,4.5) -- (-6,3.5);
			\draw (-5,3.5) -- (-5,4.5);
			\draw (-5,3.5) -- (-6,3.5);
			\draw (-6.3,4.8) -- (-2.3,4.8);
			\draw (-6.3,4.8) -- (-6.3,0.8);
			\draw (-6.3,0.8) -- (-2.3,4.8);
			\endscope
		\end{tikzpicture}
		\caption{Ferrers diagram}
		\label{FigFerDia}
		\end{subfigure}
		\begin{subfigure}[t]{0.4\textwidth}
			\centering
		\begin{tikzpicture}[description/.style={fill=white,inner sep=2pt}]
			\useasboundingbox (0.5,0.5) rectangle (5,4);
			\scope[transform canvas={scale=0.8}]
			\node at (1.5,1.5) {\includegraphics[scale=0.1]{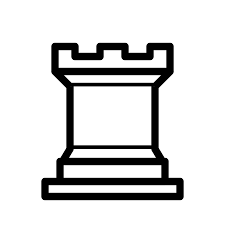}};
			\node at (2.5,2.5) {\includegraphics[scale=0.1]{rook}};
			\node at (3.5,3.5) {\includegraphics[scale=0.1]{rook}};
			\node at (5.5,4.5) {\includegraphics[scale=0.1]{rook}};
			\draw (1,1) -- (2,1);
			\draw (1,2) -- (3,2);
			\draw (1,4) -- (6,4);
			\draw (1,3) -- (4,3);
			\draw (1,5) -- (6,5);
			\draw (1,1) -- (1,5);
			\draw (2,1) -- (2,5);
			\draw (3,2) -- (3,5);
			\draw (4,3) -- (4,5);
			\draw (5,4) -- (5,5);
			\draw (6,4) -- (6,5);
			\endscope
		\end{tikzpicture}
		\caption{Ferrers board}
		\label{FigFerboa}
		\end{subfigure}
		\caption{The partition $(5,3,2,1)$ of $11$ has a max--rook number $4$.}
		\label{ExPic}
	\end{figure}

Then, for $n \in \mathbb{N}$, let us define $F(n)$ as follows:
\begin{align}
	F(n):= \textup{max } \bigg\{ R(\pi) \ \bigg{|} \ \pi \in \mathscr{P}(n) \bigg\}. \label{Fdefn}
\end{align}
For a fixed $n \in \mathbb{N}$, and for any $\pi \in \mathscr{P}(n)$, clearly $R(\pi)\leq n$. Hence, the maximal element in the above set exists, and $F(n)$ is well-defined. For convention, let us define $F(0):=1$.

Next, for any fixed $n, k \in \mathbb{N}$, we define $R_k(n)$, which are the central objects of this study, by
	\begin{align}
		R_k(n) := \# \bigg\{ \pi \in \mathscr{P}(n)\ \bigg|\ R(\pi)=k \bigg\}. \label{RkDef}
	\end{align}
	Thus, $R_k(n)$ is the number of partitions of $n$ whose associated Ferrers board has a max--rook number $k$. Let us denote the generating function of $R_k(n)$ by $\mathscr{F}_k(q)$, i.e.
	\begin{align}
		\mathscr{F}_k(q) := \sum_{n=1}^{\infty} R_k(n) q^n. \label{scrFkDef}
	\end{align} 
	As explained in Theorem \ref{FGF}, the number of partitions with a max--rook number $k$ is equal to number of the partitions with a Durfee triangle of size $k$ and hence $\mathscr{F}_k(q)$ is also the generating function of the number of partitions with a Durfee triangle of size $k$. Since each partition has a unique Durfee triangle, we get the rook decomposition of the partition function as follows: for $n \in \mathbb{N}$,
	\begin{align}
		P(n) = \sum_{k=1}^{n} R_k(n), \label{PR_j}
	\end{align} 
	where $R_k(n)$ is as defined in \eqref{RkDef}. This decomposition is completely combinatorial in nature. Also note that, $R_k(n)=0$ for all $n<t_k$, where $t_n$ is the $n^{th}$ triangular number. From this perspective, the family of functions $R_k(n)$ is the building block of the partition function $P(n)$. Hence, the properties of $R_k(n)$ can give an alternative way to see the properties of the partition function $P(n)$. The decomposition of the partitions up to $P(50)$ has been computed using a PARI algorithm and tabulated in rows and columns by Andrew Howroyd in \seqnum{A325188} \cite{oeis-2}. The decomposition for $P(n)$ for $1\leq n\leq 20$ can be seen in Table \ref{table R} below as the column sum.
	
	\renewcommand{\arraystretch}{1.2}
	\begin{table}[h!]
		\centering
		\begin{tabular}{||c||c|c|c| c| c| c| c| c| c| c| c| c| c| c|c|c|c|c|c|c||} 
			\hline
			$n$ & 1 & 2 & 3 & 4 & 5 & 6 & 7 & 8 & 9 &10 &11 &12& 13 & 14&15&16&17&18&19&20\\
			\hline 
			$P(n)$&  1 & 2 & 3 & 5 & 7 & 11 & 15&22 &30 & 42 &56 & 77 &101& 135 & 176& 231& 297& 385 & 490& 627\\ 
			\hline
			\hline 
			$R_1(n)$& 1 &2 & 2 & 2 & 2 & 2 & 2 &2 & 2 &2 & 2 &2& 2 & 2&2&2&2&2&2&2\\
			\hline
			$R_2(n)$& -- & -- & 1& 3& 5 & 8 & 9 &12 & 13 &16 & 17 &20& 21 & 24&25&28&29&32&33&36\\ 
			\hline
			$R_3(n)$&-- &-- &--  & -- &-- & 1 & 4 & 8 & 15 & 23 & 32 & 43 & 54 & 67 & 82&97&114&133&152&173\\ 
			\hline
			$R_4(n)$&-- &-- &--  & -- &-- & -- & -- &-- & -- & 1 & 5 &12& 24 &42 &66&98&135&181&233&298\\ 
			\hline
			$R_5(n)$&-- &-- &--  & -- &-- & -- & -- &-- & -- &-- & -- &--& -- &--&1&6&17&37&70&118\\ 
			\hline
		\end{tabular}
		\caption{Rook decomposition of the partition function $P(n)$.}
		\label{table R}
	\end{table}
	
	The rows associated with $k=1$ and $k=2$ in Table \ref{table R} represented by $R_1(n)$ and $R_2(n)$ respectively, have been studied. The rows associated to any $k\geq3$ are yet to be studied and the generating functions of the same are not known. In this paper, we prove the generating functions of the rows $k=3$, $4$ and $5$, their exact formulas, periodicity, congruences and asymptotics. 
	
	It can be seen trivially that
	\begin{align}\label{R1E}
		R_1(n) =
		\begin{cases}
			1, & \textup{if } n=1, \\
			2, & \textup{if } n\geq 2,
		\end{cases}
	\end{align} 
	since for $n \geq 2$, only the two partitions $(n)$ and $(1,1, \cdots, 1)$ of $n$ take the value $R(\pi)=1$. 
	Colin Barker  \seqnum{A325168} \cite{oeis} gave the following generating function of $R_2(n)$,
	\begin{align}
		\mathscr{F}_2(q) = \sum_{n=1}^{\infty} R_2(n) q^n = \frac{q^3 (1+2q+q^2+q^3-q^4)}{(1-q)^2(1+q)}. \label{ScrF2GF}
	\end{align}
	$R_2(n)$ satisfies the recurrence relation  \seqnum{A325168} \cite{oeis} for $n>7$ given by,
	\begin{align}
		R_2(n)=R_2(n-1)+R_2(n-2)-R_2(n-3).  \label{R2Rec}
	\end{align}
	In Corollary \ref{R3RecCor}, we prove that $R_3(n)$ satisfies the following analogous recursive relation for $n>14$:
	\begin{align}
		R_3(n) = 2R_3(n-1) -R_3(n-2) +R_3(n-3) -2R_3(n-4) +R_3(n-5). \label{R3RecInt}
	\end{align}
	Further, as observed by Colin Barker \seqnum{A325168} \cite{oeis}, we also know that, for $n>4$,
	\begin{align}\label{R2E}
		R_2(n) =
		\begin{cases}
			2n-4, & \textup{when $n$ is even}, \\
			2n-5, & \textup{when $n$ is odd}.
		\end{cases}
	\end{align}

	The explicit evaluation of $R_2(n)$ in \eqref{R2E} gives the parity for $n>4$: it is even for even arguments and odd for odd arguments. Also, from \eqref{R1E} and \eqref{R2E}, we can see that $R_1(n)$ is bounded and $R_2(n)$ grows linearly respectively, as $n \to \infty$.
	
	In Theorems \ref{R3thmexk} and \ref{R4thmexk}, we find similar explicit fomulas for $R_3(n)$ and $R_4(n)$ respectively, which look surprisingly simple. In principle, one should note that it is possible to use the same method to find explicit formula $R_k(n)$ for any $k \in \mathbb{N}$ but that it becomes cumbersome as $k$ increases. In \cite[Theorem 1.1]{ono}, Bruinier and Ono obtain a closed-form expression for $P(n)$ as a finite sum of some algebraic numbers. Apart from this, there seems to be no known closed-form analytic expression of $P(n)$. The closed-form expressions for $R_k(n)$ can give an analytic closed-form expression for $P(n)$. Also as a direct implication of Theorems \ref{R3thmexk} and \ref{R4thmexk}, we obtain the asymptotic growth of $R_3(n)$ and $R_4(n)$ in Corollaries \ref{CorR3G} and \ref{CorR4G} respectively.

	Ramanujan discovered and proved the following three congruence identities that are satisfied by the partition function, for any $\ell \in \mathbb{N}$:

		\begin{equation} \label{R1C}
			\begin{split}
			&P(5\ell+4)\equiv 0 \textup{ mod (5)},\\
			&P(7\ell+5)\equiv 0 \textup{ mod (7)},\\
			&P(11\ell+6)\equiv 0 \textup{ mod (11)}.	
			\end{split}
		\end{equation}

	The above congruences \eqref{R1C} give us hints concerning the periodic property of $P(n)$ modulo $5$, $7$ and $11$ respectively. We show that $R_k(n)$ for $k=3$ and $4$ exhibit such a periodic property modulo $p$, for any $p\in \mathbb{N}$, $p\geq 2$. Ramanujan's congruences of $P(n)$ could possibly be seen as implications of the periodicity properties of the general $R_k(n)$ mod $(p)$. In \cite{ono2}, Ono studied the distribution of $P(n)$ mod $(m)$ for primes $m\geq5$, which suggests that $R_k(n)$ mod $(p)$ for non-primes $p$ deserves an independent study.
	
	Let $(a;q)_n$ denote the $q$-Pochhamer symbol defined by
		\begin{align*}
				(a;q)_n := (1-a)(1-aq) \cdots (1-aq^{n-1}), \textup{ for } n\geq 1, 
		\end{align*}
	and similarly $(a;q)_{\infty}= (1-a)(1-aq) \cdots$. For convention, let us have $(a;q)_0=1$. 
	
	The generating function of the number of partitions with a Durfee square of size $k$, denoted by $\mathscr{G}_k(q)$, is well studied \cite[p.28]{AndrewsBook} and is given by
	\begin{align} 
		\mathscr{G}_k(q)=\frac{q^{k^2}}{(q;q)_k^2}. \label{ScrGkGF}
	\end{align}
	Unlike $\mathscr{G}_k(q)$, the generating functions $\mathscr{F}_k(q)$ are largely unexplored.	As previously mentioned, we explained in Theorem \ref{FGF} that the number of partitions with a max--rook number $k$ is equal to the number of partitions with a Durfee triangle of size $k$. We exploit this equivalence to get the generating functions of $R_3(n), R_4(n)$ and $R_5(n)$, namely, $\mathscr{F}_3(q)$, $\mathscr{F}_4(q)$ and $\mathscr{F}_5(q)$ in Theorems \ref{F3T}, \ref{F4T} and \ref{F5T} respectively.

	As a direct implication of Euler's pentagonal number theorem, we have the recurrence relation for the partition function $P(n)$ \cite[Corollary 1.8]{AndrewsBook} as follows,  
	\begin{align}
		P(n)= \sum_{k\in \mathbb{Z},\ k\ne 0} (-1)^{k-1} P(n-g_k), \label{PRec}
	\end{align}
	where $g_k=\frac{1}{2}k(3k-1)$ for $k \in \mathbb{Z} \setminus \{0\}$ are called the \emph{generalized pentagonal numbers}. The right-hand side of \eqref{PRec} is effectively a finite sum because $P(n-g_k)=0$ for $k \notin (-\frac{1}{6}(\sqrt{24n+1}-1),\frac{1}{6}(\sqrt{24n+1}-1))$. Thus, \eqref{PRec} is a finite recursive formula for the partition function $P(n)$ but with the number of non-zero terms on the right-hand side depending on $n$, the number being partitioned. In contrast with \eqref{PRec}, the recursive relation for $R_2(n)$ \eqref{R2Rec} has a fixed number of terms (3 terms) on the right-hand side, which is independent of $n$. In Corollary \ref{R3RecCor}, we obtain the recursive relation for $R_3(n)$ and further show that it, too, has a fixed number of terms (5 terms) on the right-hand side. These relations \eqref{R2Rec} and \eqref{R3RecInt} can be seen as rook analogues of \eqref{PRec}.
	
	We explicitly prove the parity and parity bias of $R_3(n)$, $R_4(n)$ and $R_5(n)$ in Section \ref{SecPar}. $R_k(n)$ also satisfies many congruences, apart from modulo $2$. 
	\section{Generating functions}\label{SectionGF}
	The following is the generating function of $F(n)$ defined in \eqref{Fdefn}.
	\begin{theorem}\label{FGF}
		For $|q|<1$,
		\begin{align}
			\sum_{n=0}^{\infty} F(n) q^n = \frac{1}{1-q}\big( (q^2;q^2)_{\infty} (-q;q)_{\infty} -q \big).
		\end{align}
	\end{theorem}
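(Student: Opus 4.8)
The plan is to reduce the statement to a fact about triangular numbers and then evaluate the resulting series by a classical identity. \emph{Step 1 (the combinatorial core).} First I would prove that for every partition $\pi=(\lambda_1\ge\lambda_2\ge\cdots\ge\lambda_r)$ the max--rook number $R(\pi)$ equals the size $d$ of its Durfee triangle, i.e. the largest $k$ with $\lambda_i\ge k+1-i$ for all $1\le i\le k$; this is the equivalence alluded to in the introduction. For the lower bound $R(\pi)\ge d$, place rooks in the cells (row $i$, column $d+1-i$) for $i=1,\dots,d$: these lie in $B_\pi$ exactly because $\lambda_i\ge d+1-i$, and they occupy distinct rows and distinct columns, hence are non-intersecting. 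For the upper bound $R(\pi)\le d$, suppose $m$ non-intersecting rooks are placed, occupying rows $i_1<\cdots<i_m$ (so $i_j\ge j$) and pairwise distinct columns. Fixing $i\le m$, the $m-i+1$ rooks in rows $i_i<\cdots<i_m$ use distinct columns, so one of them, say in row $i_\ell$ with $\ell\ge i$, has column at least $m-i+1$; since that column is $\le\lambda_{i_\ell}\le\lambda_i$, we get $\lambda_i\ge m+1-i$. Thus a Durfee triangle of size $m$ fits, so $d\ge m$ and $R(\pi)\le d$. I expect this upper bound (the pigeonhole argument on distinct columns) to be the main obstacle.

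\emph{Step 2 (evaluating $F$).} By Step 1, $F(n)=\max_{\pi\in\mathscr{P}(n)}d(\pi)$ for $n\ge1$. A Durfee triangle of size $k$ occupies $t_k$ nodes, so any partition admitting one has at least $t_k$ nodes; hence $d(\pi)\le k$ whenever $n<t_{k+1}$. Conversely, whenever $t_k\le n$ the partition $(k+n-t_k,\,k-1,\,k-2,\dots,1)$ of $n$ has Durfee triangle of size exactly $k$, so the value is attained. Therefore $F(n)=\max\{k\ge1:\ t_k\le n\}=\#\{k\ge1:\ t_k\le n\}$ for $n\ge1$, the last equality because $t_1<t_2<\cdots$.

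\emph{Step 3 (the generating function).} Interchanging the order of summation,
\begin{align*}
\sum_{n=1}^{\infty}F(n)q^n=\sum_{n=1}^{\infty}\Big(\sum_{k\ge1}[\,t_k\le n\,]\Big)q^n=\sum_{k\ge1}\sum_{n\ge t_k}q^n=\frac{1}{1-q}\sum_{k\ge1}q^{t_k}.
\end{align*}
Adding the convention $F(0)=1$ and writing $\sum_{k\ge1}q^{t_k}=\big(\sum_{k\ge0}q^{t_k}\big)-1$ gives
\begin{align*}
\sum_{n=0}^{\infty}F(n)q^n=1+\frac{1}{1-q}\Big(\sum_{k\ge0}q^{t_k}-1\Big)=\frac{1}{1-q}\Big(\sum_{k\ge0}q^{t_k}-q\Big).
\end{align*}
Finally I would invoke the classical identity $\sum_{k\ge0}q^{k(k+1)/2}=(q^2;q^2)_\infty(-q;q)_\infty$ (Gauss's triangular-number formula, using Euler's $(-q;q)_\infty=1/(q;q^2)_\infty$), which converts the last display into the claimed expression. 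The only analytic input is this identity; everything else is bookkeeping.
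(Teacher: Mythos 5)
Your proposal is correct and follows essentially the same route as the paper: you identify $F(n)=m$ on the interval $t_m\le n<t_{m+1}$, sum $q^n$ over these intervals to obtain $\frac{1}{1-q}\sum_{m\ge1}q^{t_m}$, and finish with Gauss's identity $\sum_{k\ge0}q^{k(k+1)/2}=(q^2;q^2)_\infty(-q;q)_\infty$, exactly as the paper does via its citation of Andrews. The only substantive difference is that your Step 1 supplies a complete proof of the equivalence $R(\pi)=d(\pi)$ (antidiagonal placement for the lower bound, the pigeonhole on distinct columns for the upper bound), which the paper merely asserts when it says a Durfee triangle of size $m+1$ is necessary for max--rook number $m+1$, and your direct interchange of summation is a minor streamlining of the paper's telescoping manipulation of $(1-q)\sum_n F(n)q^n$.
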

	\begin{proof}
	Fix any arbitrary $n \in \mathbb{N}$, there exists a unique $m \in \mathbb{N}$ such that $t_m \leq n <t_{m+1}$, where $t_m$ is the $m^{th}$ triangular number. Then, there exists a partition of $n$ which has a max--rook number $m$, for instance, $\left(1,2, ...\ , m-1,  n -\frac{m(m-1)}{2}\right)$. Also observe that, for $n<t_{m+1}$, no partition of $n$ can have a max--rook number of $m+1$, since the Durfee triangle of size $m+1$ is necessary for a partition to have a max--rook number $m+1$, which is not possible. Hence, we see,
	\begin{align*}
		F(n) = m, \hspace{0.5 cm} \textup{for any $n$ such that } t_m \leq n <t_{m+1}.
	\end{align*} 
	Thus, the generating function of $F(n)$ is,
	\begin{align}
		\sum_{n=1}^{\infty} F(n) q^n = \sum_{m=1}^{\infty} \sum_{r=0}^{m} m q^{\frac{m(m+1)}{2}+r} = \sum_{m=1}^{\infty} m q^{\frac{m(m+1)}{2}} \sum_{r=0}^{m} q^r = \sum_{m=1}^{\infty} m q^{\frac{m(m+1)}{2}} \times \frac{1-q^{m+1}}{1-q}, \label{FGFstart}
	\end{align}
	where we use the sum of a geometric progression in the last step. Now, we multiply both the sides of \eqref{FGFstart} by $1-q$ to get,
	\begin{align}
		(1-q) \sum_{n=1}^{\infty} F(n) q^n =  \sum_{m=1}^{\infty} m q^{\frac{m(m+1)}{2}} (1-q^{m+1}). \label{Fst1}
	\end{align} 
	We split the terms on both sides into separate series as follows,
	\begin{align*}
		\sum_{n=1}^{\infty} F(n) q^n - \sum_{n=1}^{\infty} F(n) q^{n+1} = \sum_{m=1}^{\infty} m q^{\frac{m(m+1)}{2}} - \sum_{m=1}^{\infty} m q^{\frac{(m+1)(m+2)}{2}}.
	\end{align*}
	Replace $n$ by $n-1$ and $m$ by $m-1$ in the second series of the left-hand side and right-hand side respectively,
	\begin{align*}
	 	\sum_{n=1}^{\infty} F(n) q^n - \sum_{n=2}^{\infty} F(n-1) q^{n} = \sum_{m=1}^{\infty} m q^{\frac{m(m+1)}{2}} - \sum_{m=2}^{\infty} (m-1) q^{\frac{m(m+1)}{2}}.
	\end{align*}
	Now, combine the series to get,
	\begin{align}
		F(1)q + \sum_{n=2}^{\infty} (F(n)-F(n-1)) q^n = q + \sum_{m=2}^{\infty} q^{\frac{m(m+1)}{2}} = q + \sum_{m=2}^{\infty} q^{t_m}, \label{Fst2}
	\end{align}
	where $t_m$ is the $m^{th}$ triangular number. Hence, $F(1)=1$, and for $n\geq2$,
	\begin{align}\label{FExp}
		F(n)-F(n-1)= 
		\begin{cases}
			1,& n=t_m \textup{ for some $m \in \mathbb{N}$},\\
			0,& \textup{otherwise.} 
		\end{cases}
	\end{align}
	From \eqref{Fst1} and \eqref{Fst2}, we have,
	\begin{align}
		\sum_{n=1}^{\infty} F(n) q^n = \frac{1}{1-q} \sum_{m=1}^{\infty} q^{\frac{m(m+1)}{2}}. \label{Fst3}
	\end{align}
	Use \cite[Corollary 2.10]{AndrewsBook} and add $F(0)=1$ to both sides and simplify to get the required result.
	\end{proof}
	Now, we give a combinatorial proof of the generating function of $R_3(n)$ as a Theorem.
	\begin{theorem}\label{F3T}
		For $|q|<1$, we have,
		\begin{align}
			\mathscr{F}_3(q)= \sum_{n=1}^{\infty} R_3(n) q^n = \frac{q^6(1+2q+q^2+2q^3-q^4-q^6-q^7+q^8)}{(1-q)^3(1+q+q^2)}. \label{ScrF3GF}
 		\end{align}
	\end{theorem}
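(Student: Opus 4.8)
The plan is to invoke the equivalence established in Theorem~\ref{FGF}: a partition has max--rook number $3$ exactly when its Durfee triangle has size $3$, so $R_3(n)$ counts the partitions $\pi\in\mathscr{P}(n)$ whose Ferrers diagram contains the staircase $\delta_3=(3,2,1)$ but not the staircase $\delta_4=(4,3,2,1)$. Writing $\pi=(\lambda_1,\lambda_2,\dots)$, these are precisely the partitions with $\lambda_1\ge 3$, $\lambda_2\ge 2$, $\lambda_3\ge 1$ for which at least one cell on the next antidiagonal is absent. I would first peel off the $t_3=6$ cells of $\delta_3$, which accounts for the prefactor $q^6$ appearing in \eqref{ScrF3GF}, and describe the remaining cells as three horizontal \emph{arms} $a_i=\lambda_i-(4-i)\ge 0$ $(i=1,2,3)$ attached to the right of the staircase rows, together with a \emph{tail} partition $\beta$ formed by the rows below the third, whose parts are bounded by $\lambda_3=a_3+1$.

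Next I would record the constraints that make $\pi$ a genuine partition: the monotonicity $\lambda_1\ge\lambda_2\ge\lambda_3$ forces $a_2\le a_1+1$ and $a_3\le a_2+1$, while $\lambda_3\ge\lambda_4$ forces every part of $\beta$ to be $\le a_3+1$. In this language the forbidden configuration $\delta_4\subseteq\pi$ is exactly $a_1\ge 1$, $a_2\ge 1$, $a_3\ge 1$ and $\beta\neq\emptyset$, so the size-$3$ condition is the disjunction that one of $a_1,a_2,a_3$ vanishes or that there is no fourth part. To convert this disjunction into a disjoint sum I would split according to the first index at which the size-$4$ staircase fails, giving four mutually exclusive and exhaustive families: $a_1=0$; $\ a_1\ge 1,\ a_2=0$; $\ a_1,a_2\ge 1,\ a_3=0$; and $\ a_1,a_2,a_3\ge 1,\ \beta=\emptyset$.

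Each family then has a rational generating function, which I would compute and add. In the first three families the vanishing of an arm caps every later arm to a finite range through the inequalities $a_2\le a_1+1$ and $a_3\le a_2+1$, so only one free arm (or the free string of trailing $1$'s) produces a geometric factor $1/(1-q)$, while the tail contributes a factor $1/(q;q)_{a_3+1}$ with $a_3$ restricted to a finite set. The last family is a purely finite constrained sum over $a_1,a_2,a_3\ge 1$ with $a_2\le a_1+1$, $a_3\le a_2+1$. I would evaluate each family as a quotient of elementary geometric series, sum the four contributions, and simplify over the common denominator $(1-q)^3(1+q+q^2)=(1-q)^2(1-q^3)$ to reach \eqref{ScrF3GF}.

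The main obstacle is not any single step but the bookkeeping forced by the two couplings that destroy the clean ``square $\times$ partition $\times$ partition'' picture available for Durfee squares in \eqref{ScrGkGF}: the arm lengths obey the shifted inequality $a_{i+1}\le a_i+1$ rather than being weakly decreasing, and the tail is bounded by $a_3+1$ rather than ranging freely. One must also verify that the four families are genuinely disjoint and exhaustive and that the finite arm sum in the last family is evaluated correctly, since an arithmetic slip there would corrupt the degree-$8$ numerator. As a consistency check I would expand the resulting series and match its coefficients against the $R_3(n)$ row of Table~\ref{table R}. An alternative route is to write $\mathscr{F}_3(q)=D_{\ge 3}(q)-D_{\ge 4}(q)$, where $D_{\ge k}(q)$ generates the partitions containing $\delta_k$; but each $D_{\ge k}$ carries a non-rational $1/(q;q)_\infty$ contribution that cancels only in the difference, so the direct first-failure decomposition is preferable as it keeps every intermediate quantity rational.
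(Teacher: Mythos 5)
Your proposal is correct, and it takes a genuinely different route from the paper's. The paper classifies the partitions with Durfee triangle of size $3$ into two pictorial shapes (types A and B of Figure~\ref{R_3Pic}) together with their conjugates A$^c$ and B$^c$, computes the two distinct geometric-series generating functions, doubles them, and then corrects the double counting by hand: the overlap of A with A$^c$ is exactly the case $\ell=0$, and the overlap of B with B$^c$ consists of just the sporadic partitions $(3,3,2)$ and $(3,3,3)$, removed as $q^8+q^9$. You instead parametrize each partition directly by the arms $a_i=\lambda_i-(4-i)$ and the tail $\beta$, and split according to the \emph{first} index at which the $\delta_4$-condition fails, so your four families ($a_1=0$; $a_1\ge1,a_2=0$; $a_1,a_2\ge1,a_3=0$; $a_1,a_2,a_3\ge1,\beta=\emptyset$) are disjoint and exhaustive by construction --- no conjugation and no overlap corrections are needed. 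Both arguments rest on the same coupled inequalities (your $a_{i+1}\le a_i+1$ and tail bound $\le a_3+1$ are exactly the paper's $\ell\le m+1$, $s\le\ell+1$), and your families do reproduce $R_3(6),\dots,R_3(9)=1,4,8,15$ in agreement with Table~\ref{table R}, so the remaining rational-function algebra is routine. What each approach buys: the paper's conjugation trick halves the number of raw series to evaluate, at the price of the error-prone identification of repeated partitions (precisely the step that produces the ad hoc $-q^8-q^9$); your first-failure scheme costs one or two more sums, each still an elementary rational expression, but is mechanically safe and generalizes transparently to larger $k$, where the paper's method requires cataloguing ever more overlap cases (compare the proofs of Theorems~\ref{F4T} and~\ref{F5T}). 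Two minor quibbles: your last family is not a ``purely finite'' sum, since $a_1$ is unbounded (only the inner sums over $a_2,a_3$ are finite), though its evaluation is routine; and your closing observation that each $D_{\ge k}(q)$ individually involves $1/(q;q)_\infty$ and is non-rational is correct, and is indeed a good reason to prefer the disjoint decomposition over the difference $D_{\ge 3}-D_{\ge 4}$.
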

	\begin{proof}
		The partitions of a number with the Durfee triangle of size $3$ are either of the type A, B, their conjugates A$^c$ or B$^c$, shown in Figure \ref{R_3Pic}. We have the conditions $m,s\geq0$ and $\ell\leq m+1$ for type A, and $s\leq\ell+1$ and $\ell\leq m+1$, along with $s,\ell\geq1$ for type B. We take $s,\ell\geq 1$ since any partition of type B or B$^c$ with $\ell=0$ or $s=0$ is either a partition of type A or A$^c$. We find the generating functions of partitions of type A, B, A$^c$ and B$^c$ separately and add them. We then subtract off the partitions that are repeated to get the required generating function $\mathscr{F}_3(q)$. 
		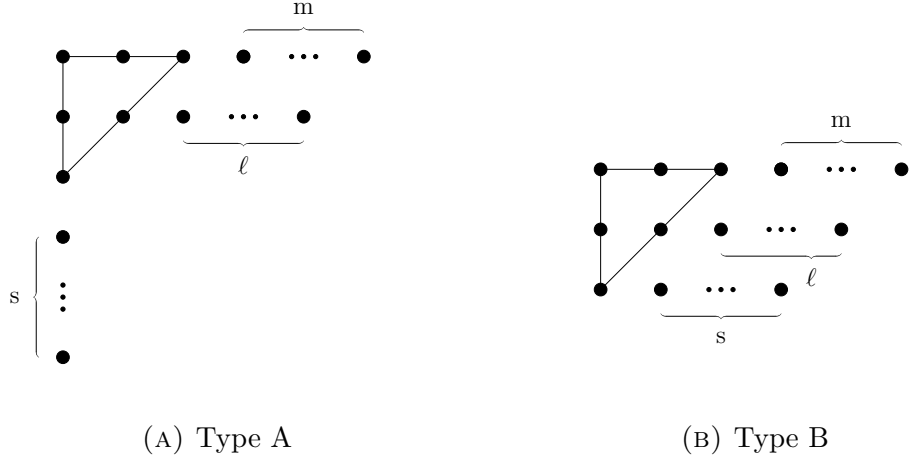
\begin{figure*}[h!]
			\centering
			\begin{subfigure}[t]{0.4\textwidth}
				\centering
					\begin{tikzpicture}[description/.style={fill=white,inner sep=2pt}]
					\useasboundingbox (-5.5,-1) rectangle (0,4.5);
					\scope[transform canvas={scale=0.8}]
					\filldraw[black] (-6,1.5) circle (3pt);
					\filldraw[black] (-6,0.3) circle (1pt);
					\filldraw[black] (-6,0.5) circle (1pt);
					\filldraw[black] (-6,0.7) circle (1pt);
					\filldraw[black] (-6,-0.5) circle (3pt);
					\filldraw[black] (-6,2.5) circle (3pt);
					\filldraw[black] (-6,3.5) circle (3pt);
					\filldraw[black] (-6,4.5) circle (3pt);
					\filldraw[black] (-3.2,3.5) circle (1pt);
					\filldraw[black] (-3,3.5) circle (1pt);
					\filldraw[black] (-2.8,3.5) circle (1pt);
					\filldraw[black] (-2,3.5) circle (3pt);
					\filldraw[black] (-5,3.5) circle (3pt);
					\filldraw[black] (-5,4.5) circle (3pt);
					\filldraw[black] (-4,3.5) circle (3pt);
					\filldraw[black] (-4,4.5) circle (3pt);
					\filldraw[black] (-3,4.5) circle (3pt);
					\filldraw[black] (-3,4.5) circle (3pt);
					\filldraw[black] (-2.2,4.5) circle (1pt);
					\filldraw[black] (-2,4.5) circle (1pt);
					\filldraw[black] (-1.8,4.5) circle (1pt);
					\filldraw[black] (-1,4.5) circle (3pt);
					\draw (-6,4.5) -- (-4,4.5);
					\draw (-6,4.5) -- (-6,2.5);
					\draw (-4,4.5) -- (-6,2.5);
					\node at (-2,5.3) {m};
					\node at (-3,2.7) {$\ell$};
					\node at (-6.8,0.5) {s};
					\draw [decorate, 
						decoration = {calligraphic brace,
						raise=0pt,
						aspect=0.5}] (-3,4.9) --  (-1,4.9);
					\draw [decorate, 
						decoration = {calligraphic brace, mirror, 
						raise=0pt,
						aspect=0.5}] (-4,3.1) --  (-2,3.1);
					\draw [decorate, 
						decoration = {calligraphic brace,
						raise=0pt,
						aspect=0.5}] (-6.4,-0.5) --  (-6.4,1.5);
					\endscope
 				\end{tikzpicture}
				\caption{Type A}
			\end{subfigure}
			~ 
			\begin{subfigure}[t]{0.4\textwidth}
				\centering
				\begin{tikzpicture}[description/.style={fill=white,inner sep=2pt}]
					\useasboundingbox (-5.5,0.5) rectangle (0,4.5);
					\scope[transform canvas={scale=0.8}]
					\filldraw[black] (-6,2.5) circle (3pt);
					\filldraw[black] (-6,3.5) circle (3pt);
					\filldraw[black] (-6,4.5) circle (3pt);
					\filldraw[black] (-5,2.5) circle (3pt);
					\filldraw[black] (-5,3.5) circle (3pt);
					\filldraw[black] (-5,4.5) circle (3pt);
					\filldraw[black] (-4.2,2.5) circle (1pt);
					\filldraw[black] (-4,2.5) circle (1pt);
					\filldraw[black] (-3.8,2.5) circle (1pt);
					\filldraw[black] (-3,2.5) circle (3pt);
					\filldraw[black] (-4,3.5) circle (3pt);
					\filldraw[black] (-4,4.5) circle (3pt);
					\filldraw[black] (-3.2,3.5) circle (1pt);
					\filldraw[black] (-3,3.5) circle (1pt);
					\filldraw[black] (-2.8,3.5) circle (1pt);
					\filldraw[black] (-2,3.5) circle (3pt);
					\filldraw[black] (-3,4.5) circle (3pt);
					\filldraw[black] (-3,4.5) circle (3pt);
					\filldraw[black] (-2.2,4.5) circle (1pt);
					\filldraw[black] (-2,4.5) circle (1pt);
					\filldraw[black] (-1.8,4.5) circle (1pt);
					\filldraw[black] (-1,4.5) circle (3pt);
					\draw (-6,4.5) -- (-4,4.5);
					\draw (-6,4.5) -- (-6,2.5);
					\draw (-4,4.5) -- (-6,2.5);
					\node at (-2,5.3) {m};
					\node at (-2.5,2.7) {$\ell$};
					\node at (-4,1.7) {s};
					\draw [decorate, 
					decoration = {calligraphic brace,
						raise=0pt,
						aspect=0.5}] (-3,4.9) --  (-1,4.9);
					\draw [decorate, 
					decoration = {calligraphic brace, mirror, 
						raise=0pt,
						aspect=0.75}] (-4,3.1) --  (-2,3.1);
					\draw [decorate, 
					decoration = {calligraphic brace, mirror,
						raise=0pt,
						aspect=0.5}] (-5,2.1) --  (-3,2.1);
					\endscope
				\end{tikzpicture}
				\caption{Type B}
			\end{subfigure}
			\caption{The two types of partitions with Durfee triangle of size $3$.}
			\label{R_3Pic}
		\end{figure*}
		\begin{align}
			&\hspace{-0.5 cm}\textup{The generating function of partitions of type A} \notag \\
			&= q^6 \sum_{m=0}^{\infty} q^m \sum_{\ell=0}^{m+1} q^\ell \sum_{s=0}^{\infty} q^s \notag \\
			&= \frac{q^6}{1-q} \sum_{m=0}^{\infty} \frac{q^m(1-q^{m+2})}{1-q} \notag \\
			&=  \frac{q^6}{(1-q)^2} \sum_{m=0}^{\infty} q^m -  \frac{q^8}{(1-q)^2} \sum_{m=0}^{\infty} q^{2m} \notag \\
			&= \frac{q^6}{(1-q)^3} - \frac{q^8}{(1-q)^2(1-q^2)}. \label{R3TA}\\ \notag
			&\hspace{-0.5 cm}\textup{The generating function of partitions of type B} \notag \\
			&= q^6 \sum_{m=0}^{\infty} q^m \sum_{\ell=1}^{m+1} q^\ell \sum_{s=1}^{\ell+1} q^s  \notag \\
			&= q^7 \sum_{m=0}^{\infty} q^m \sum_{\ell=1}^{m+1} \frac{q^\ell(1-q^{\ell+1})}{1-q}  \notag \\
			&= \frac{q^7}{1-q} \sum_{m=0}^{\infty} q^m \sum_{\ell=1}^{m+1}q^{\ell} -  \frac{q^8}{1-q} \sum_{m=0}^{\infty} q^m \sum_{\ell=1}^{m+1}q^{2\ell}  \notag \\
			&= \frac{q^8}{1-q} \sum_{m=0}^{\infty} \frac{q^m(1-q^{m+1})}{1-q} - \frac{q^{10}}{1-q} \sum_{m=0}^{\infty} \frac{q^m(1-q^{2m+2})}{1-q^2}  \notag \\
			&=  \frac{q^8}{(1-q)^2} \sum_{m=0}^{\infty} q^m - \frac{q^9}{(1-q)^2} \sum_{m=0}^{\infty} q^{2m}   - \frac{q^{10}}{(1-q)(1-q^2)} \sum_{m=0}^{\infty} q^m + \frac{q^{12}}{(1-q)(1-q^2)} \sum_{m=0}^{\infty} q^{3m}  \notag \\
			&= \frac{q^8}{(1-q)^3} - \frac{q^9}{(1-q)^2(1-q^2)} - \frac{q^{10}}{(1-q)^2(1-q^2)} +\frac{q^{12}}{(1-q)(1-q^2)(1-q^3)}.\label{R3TB}
		\end{align}

	The generating functions of type A$^c$ and B$^c$ are the same as A and B respectively, hence we multiply \eqref{R3TA} and \eqref{R3TB} by 2. The partitions which are both of type A and A$^c$, i.e. the ones that are counted twice, are exactly when $\ell=0$, which we subtract. The only partitions which are of type B and B$^c$ are $(3,3,2)$ and $(3,3,3)$, hence we subtract $q^8$ and $q^9$ to get the generating function $\mathscr{F}_3(q)$. Thus,
	\begin{align*}
		\mathscr{F}_3(q) &= 2 \left( \frac{q^6}{(1-q)^3} - \frac{q^8}{(1-q)^2(1-q^2)} \right) + 2 \left( \frac{q^8}{(1-q)^3} - \frac{q^9}{(1-q)^2(1-q^2)} - \frac{q^{10}}{(1-q)^2(1-q^2)} \right. \\
		& \quad \left. +\frac{q^{12}}{(1-q)(1-q^2)(1-q^3)} \right) - q^6 \sum_{m=1}^{\infty} q^{m}\sum_{s=1}^{\infty} q^{s} -q^8 -q^9.
	\end{align*} 
	Simplify to get \eqref{ScrF3GF}.
	\end{proof}
	We use the generating function of $R_3(n)$ to obtain its recurrence relation analogous to that of $R_2(n)$ as stated in \eqref{R2Rec}.
	\begin{corollary}\label{R3RecCor}
		For $n>14$, the following recurrence relation holds:
		\begin{align}
			R_3(n) = 2R_3(n-1) -R_3(n-2) +R_3(n-3) -2R_3(n-4) +R_3(n-5).  \label{R3Rec}
		\end{align}
	\end{corollary}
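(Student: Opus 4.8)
The plan is to extract the recurrence directly from the closed form of $\mathscr{F}_3(q)$ proved in Theorem \ref{F3T}, exploiting the standard fact that a rational generating function with polynomial numerator satisfies a constant-coefficient linear recurrence whose characteristic polynomial is its denominator, the recurrence being valid once $n$ exceeds the degree of the numerator.

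First I would expand the denominator appearing in \eqref{ScrF3GF}, namely
\begin{align*}
(1-q)^3(1+q+q^2) = 1 - 2q + q^2 - q^3 + 2q^4 - q^5.
\end{align*}
The crucial observation is that the coefficients $1, -2, 1, -1, 2, -1$ read off here are exactly the coefficients that will appear in the claimed recurrence \eqref{R3Rec}; so the recurrence is really encoded in the denominator of $\mathscr{F}_3(q)$.

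Next I would clear the denominator in \eqref{ScrF3GF} to obtain
\begin{align*}
\mathscr{F}_3(q)\,(1 - 2q + q^2 - q^3 + 2q^4 - q^5) = q^6\bigl(1+2q+q^2+2q^3-q^4-q^6-q^7+q^8\bigr),
\end{align*}
whose right-hand side is a polynomial of degree $14$. Writing $\mathscr{F}_3(q) = \sum_{n\geq 1} R_3(n) q^n$ and reading off the coefficient of $q^n$ on the left via the Cauchy product gives the combination
\begin{align*}
R_3(n) - 2R_3(n-1) + R_3(n-2) - R_3(n-3) + 2R_3(n-4) - R_3(n-5).
\end{align*}
For every $n > 14$ the right-hand polynomial contributes nothing, so this combination vanishes, and rearranging yields \eqref{R3Rec}.

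There is essentially no serious obstacle here: the only things to check carefully are the expansion of the denominator and the degree (namely $14$) of the numerator on the right, which is precisely what fixes the threshold $n>14$ in the statement. Because we restrict to $n>14$, the vanishing of $R_3(n)$ for small $n$ (indeed $R_3(n)=0$ for $n<t_3=6$) plays no role, so no separate treatment of boundary terms is required.
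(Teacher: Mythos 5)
Your proposal is correct and is essentially identical to the paper's proof: the paper likewise expands the denominator of \eqref{ScrF3GF} as $1-2q+q^2-q^3+2q^4-q^5$, cross-multiplies, and compares coefficients of $q^n$ for $n>14$ (the degree of the numerator). The only cosmetic difference is that the paper uses $R_3(n)=0$ for $1\leq n\leq 5$ to tidy the index shifts in the intermediate series, whereas you correctly observe that for $n>14$ no boundary terms arise in any case.
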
 
	\begin{proof}
		From Theorem \ref{F3T}, we have
		\begin{align}
			\sum_{n=1}^{\infty} R_3(n) q^n 	&= \frac{q^6(1+2q+q^2+2q^3-q^4-q^6-q^7+q^8)}{1-2q+q^2-q^3+2q^4-q^5}.
		\end{align}
	We cross-multiply to get,
		\begin{align*}
			\left( \sum_{n=1}^{\infty} R_3(n) q^n \right) \left( 1-2q+q^2-q^3+2q^4-q^5 \right) = q^6(1+2q+q^2+2q^3-q^4-q^6-q^7+q^8).
		\end{align*}	
	Hence, we get,
	\begin{align*}
		&\sum_{n=1}^{\infty} R_3(n) q^n - \sum_{n=1}^{\infty} 2 R_3(n) q^{n+1} + \sum_{n=1}^{\infty} R_3(n) q^{n+2}- \sum_{n=1}^{\infty} R_3(n) q^{n+3}+ \sum_{n=1}^{\infty} 2 R_3(n) q^{n+4} - \sum_{n=1}^{\infty} R_3(n) q^{n+5}\\
		&= q^6(1+2q+q^2+2q^3-q^4-q^6-q^7+q^8).
 	\end{align*}
 	Use the fact that $R_3(n)=0$ for $1\leq n\leq5$ to get,
 	\begin{align*}
 		&\sum_{n=6}^{\infty} R_3(n) q^n - \sum_{n=5}^{\infty} 2 R_3(n) q^{n+1} + \sum_{n=4}^{\infty} R_3(n) q^{n+2}- \sum_{n=3}^{\infty} R_3(n) q^{n+3}+ \sum_{n=2}^{\infty} 2 R_3(n) q^{n+4} - \sum_{n=1}^{\infty} R_3(n) q^{n+5}\\
 		&= q^6(1+2q+q^2+2q^3-q^4-q^6-q^7+q^8).
 	\end{align*}
 	Taking a suitable change of variables in each of the series on the left-hand side, we get,
 	\begin{align*}
 		&\sum_{n=6}^{\infty}\bigg( R_3(n) - 2R_3(n-1) +R_3(n-2) -R_3(n-3) +2R_3(n-4) -R_3(n-5) \bigg) q^n\\
 		&= q^6(1+2q+q^2+2q^3-q^4-q^6-q^7+q^8).
 	\end{align*}
 	Hence, by comparing the coefficients of $q^n$ on both sides of the equation, we get, for $n>14$,
 	\begin{align*}
 		R_3(n) - 2R_3(n-1) +R_3(n-2) -R_3(n-3) +2R_3(n-4) -R_3(n-5) =0.
 	\end{align*}
 	Rearrange the terms to get the required recurrence relation \eqref{R3Rec}.
	\end{proof} 
	Now, we give a combinatorial proof of the generating function of $R_4(n)$ as a Theorem.
	\begin{theorem}\label{F4T}
	For $|q|<1$, we have,
		\begin{align}
			\mathscr{F}_4(q) &= \sum_{n=1}^{\infty} R_4(n) q^n \notag \\
			&= \frac{q^{10}(1+4q+6q^2+7q^3+6q^4+2q^5-5q^7-5q^8-5q^9+q^{11}+3q^{12}+2q^{13}-q^{16})}{(1-q)(1-q^2)(1-q^3)(1-q^4)}. \label{ScrF4GF}
		\end{align}		
	\end{theorem}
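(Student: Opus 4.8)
The plan is to reproduce the combinatorial strategy used for $\mathscr{F}_3(q)$ in Theorem \ref{F3T}. A partition has a Durfee triangle of size exactly $4$ precisely when its Ferrers diagram contains the staircase $4,3,2,1$ in its first four rows but does \emph{not} contain $5,4,3,2,1$ in its first five rows. Deleting the ten cells of the size-$4$ triangle leaves cells attached to the right of and below the staircase hypotenuse; since the hypotenuse drops by one cell per row, the attached rows obey the interlocking bounds $\ell_{j+1}\le \ell_j+1$ rather than the plain non-increasing condition, and the ``size exactly $4$, not $5$'' requirement forces at least one tier along the staircase to be deficient. First I would classify such partitions into a small collection of types — the size-$4$ analogues of the types A and B of Figure \ref{R_3Pic}, of which there are more because the staircase now has four tiers — together with their conjugates, recording for each type the nonnegative integer parameters (the excess length of each tier) and the chain of inequalities they must satisfy.

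Next, for each type I would express its generating function as an iterated geometric sum in these parameters, carrying the prefactor $q^{10}$ for the ten triangle cells, and evaluate the sums tier by tier exactly as in the derivations of \eqref{R3TA} and \eqref{R3TB}; each inner sum is a finite geometric progression, so this step is purely mechanical. Conjugation transposes the diagram, preserves the Durfee triangle and therefore its size, and swaps each type with its conjugate, so conjugate types carry identical generating functions and the contribution of every non-self-conjugate type is doubled, just as \eqref{R3TA} and \eqref{R3TB} were doubled. I would then apply inclusion--exclusion to undo the overcount: partitions belonging to two types at once — in particular the self-conjugate ones, along with a finite list of small exceptional partitions, the analogues of the $(3,3,2)$ and $(3,3,3)$ that contributed the subtracted $q^8$ and $q^9$ in the size-$3$ case — must be removed so that each partition with Durfee triangle of size $4$ is counted exactly once. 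Summing the doubled type contributions, subtracting the overlaps, and clearing everything to the common denominator $(q;q)_4=(1-q)(1-q^2)(1-q^3)(1-q^4)$ yields \eqref{ScrF4GF} after collecting terms.

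The step I expect to be the main obstacle is the classification and overlap bookkeeping. With the longer staircase $4,3,2,1$ there are more tiers, hence more types and a richer web of overlaps than for size $3$, so the delicate part is to enumerate the types exhaustively, pin down the interlocking inequalities that guarantee each configuration is a genuine partition whose Durfee triangle has size exactly $4$ (and not $5$), and identify precisely which partitions are multiply counted and which small ones must be subtracted by hand. Once the types and their constraints are fixed, the geometric-sum evaluations and the final algebraic simplification to \eqref{ScrF4GF} are routine, if lengthy.
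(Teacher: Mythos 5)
Your plan is essentially the paper's own proof: the paper classifies the partitions with a size-$4$ Durfee triangle into three types A, B, C (Figure \ref{R_4Pic}) plus their conjugates, evaluates each type as an iterated geometric sum with prefactor $q^{10}$, doubles the contributions of A and C (B coincides with its own conjugate type, so B$^c$ is ignored), subtracts the overlaps, and clears to the denominator $(1-q)(1-q^2)(1-q^3)(1-q^4)$, exactly as you outline. One correction to your overlap bookkeeping, since you flag it as the delicate step: the overlap of A with A$^c$ is the \emph{infinite} family $\ell=0$, subtracted as the series $\frac{q^{10}(1+q)}{(1-q)^2}$ rather than as self-conjugate partitions plus finitely many exceptions, and only the C--C$^c$ overlap is a finite list, namely $(4,4,3,2)$, $(4,4,4,2)$, $(4,4,3,3)$, $(4,4,4,3)$, $(4,4,4,4)$, contributing the subtracted $q^{13}+2q^{14}+q^{15}+q^{16}$.
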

	\begin{proof}
		The partitions of a number with the Durfee triangle of size $4$ are at least of one of the type A, B, C, their conjugates A$^c$, B$^c$ or C$^c$, shown in Figure \ref{R_4Pic}. For type A, we take $s,m\geq0$, $r\leq \ell+1$ and $\ell \leq m+1$. For type B, we take $s\geq0$, $m\geq0$, $r\geq1$, $\ell\geq1$, $r\leq s+1$ and $\ell\leq m+1$, since $r=0$ or $\ell=0$ is already covered by type A and A$^c$. For type C, we take $m \geq 0$, $s\geq1$, $r\geq 1$, $\ell\geq1$, $s\leq r+1$, $r\leq \ell+1$ and $\ell\leq m+1$, since $s=0$, $r=0$ and $\ell=0$ is covered by type A, type B and type A$^c$ respectively. We now find the generating functions of the types separately.  
		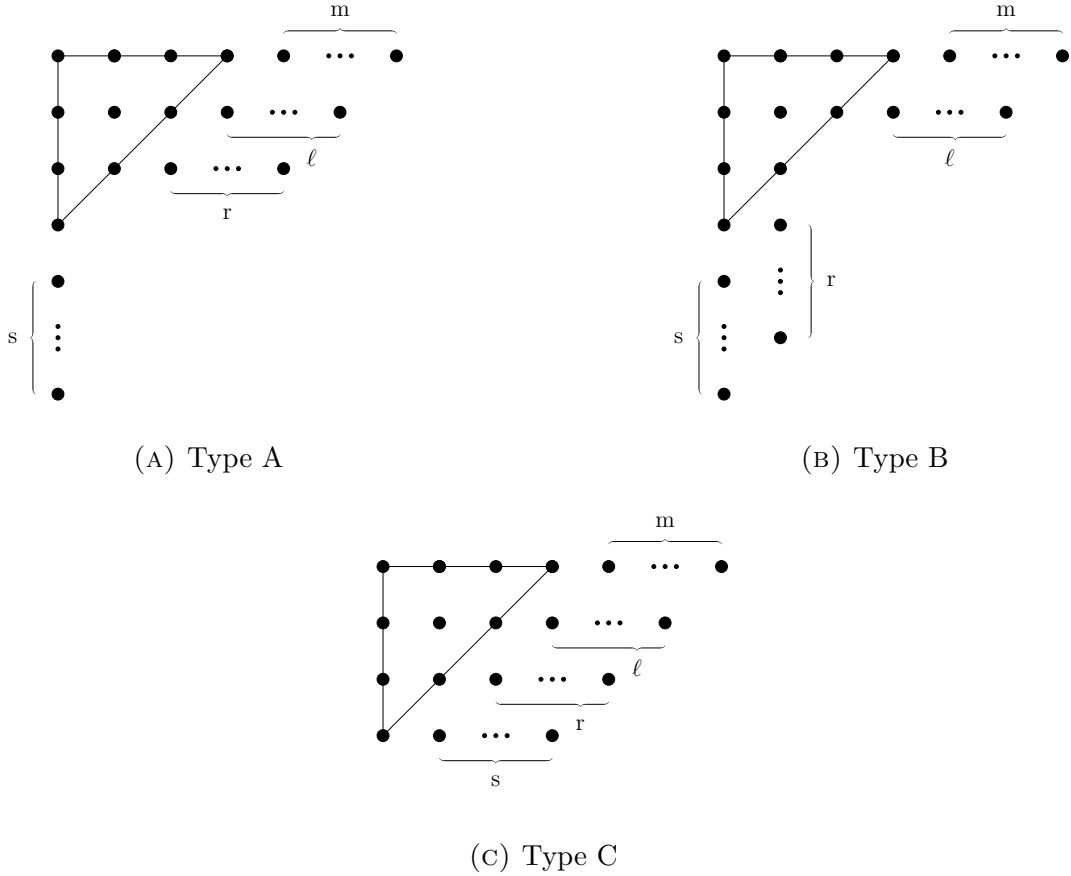
\begin{figure*}[h!]
			\centering
			\begin{subfigure}[t]{0.5\textwidth}
				\centering
				\begin{tikzpicture}[description/.style={fill=white,inner sep=2pt}]
					\useasboundingbox (-5.5,-1.5) rectangle (0.5,4);
					\scope[transform canvas={scale=0.75}]
					\filldraw[black] (-6,1.5) circle (3pt);
					\filldraw[black] (-6,0.5) circle (3pt);
					\filldraw[black] (-6,-0.3) circle (1pt);
					\filldraw[black] (-6,-0.5) circle (1pt);
					\filldraw[black] (-6,-0.7) circle (1pt);
					\filldraw[black] (-6,-1.5) circle (3pt);
					\filldraw[black] (-6,2.5) circle (3pt);
					\filldraw[black] (-6,3.5) circle (3pt);
					\filldraw[black] (-6,4.5) circle (3pt);
					\filldraw[black] (-5,4.5) circle (3pt);
					\filldraw[black] (-5,2.5) circle (3pt);
					\filldraw[black] (-3.2,2.5) circle (1pt);
					\filldraw[black] (-3,2.5) circle (1pt);
					\filldraw[black] (-2.8,2.5) circle (1pt);
					\filldraw[black] (-4,2.5) circle (3pt);
					\filldraw[black] (-2,2.5) circle (3pt);
					\filldraw[black] (-2.2,3.5) circle (1pt);
					\filldraw[black] (-2,3.5) circle (1pt);
					\filldraw[black] (-1.8,3.5) circle (1pt);
					\filldraw[black] (-1,3.5) circle (3pt);
					\filldraw[black] (-3,3.5) circle (3pt);
					\filldraw[black] (-5,3.5) circle (3pt);
					\filldraw[black] (-5,4.5) circle (3pt);
					\filldraw[black] (-4,3.5) circle (3pt);
					\filldraw[black] (-4,4.5) circle (3pt);
					\filldraw[black] (-3,4.5) circle (3pt);
					\filldraw[black] (-3,4.5) circle (3pt);
					\filldraw[black] (-1.2,4.5) circle (1pt);
					\filldraw[black] (-1,4.5) circle (1pt);
					\filldraw[black] (-0.8,4.5) circle (1pt);
					\filldraw[black] (0,4.5) circle (3pt);
					\filldraw[black] (-2,4.5) circle (3pt);
					\draw (-6,4.5) -- (-3,4.5);
					\draw (-6,4.5) -- (-6,1.5);
					\draw (-3,4.5) -- (-6,1.5);
					\node at (-1,5.3) {m};
					\node at (-1.5,2.7) {$\ell$};
					\node at (-3,1.7) {r};
					\node at (-6.8,-0.5) {s};
					\draw [decorate, 
					decoration = {calligraphic brace,
						raise=0pt,
						aspect=0.5}] (-2,4.9) --  (0,4.9);
					\draw [decorate, 
					decoration = {calligraphic brace, mirror, 
						raise=0pt,
						aspect=0.75}] (-3,3.1) --  (-1,3.1);
					\draw [decorate, 
					decoration = {calligraphic brace, mirror, 
						raise=0pt,
						aspect=0.5}] (-4,2.1) --  (-2,2.1);
					\draw [decorate, 
					decoration = {calligraphic brace,
						raise=0pt,
						aspect=0.5}] (-6.4,-1.5) --  (-6.4,0.5);
					\endscope
				\end{tikzpicture}
				\caption{Type A}
			\end{subfigure}
			~ 
			\begin{subfigure}[t]{0.5\textwidth}
				\centering
				\begin{tikzpicture}[description/.style={fill=white,inner sep=2pt}]
					\useasboundingbox (-5.5,-1.5) rectangle (0.5,4);
					\scope[transform canvas={scale=0.75}]
					\filldraw[black] (-6,1.5) circle (3pt);
					\filldraw[black] (-6,0.5) circle (3pt);
					\filldraw[black] (-6,-0.3) circle (1pt);
					\filldraw[black] (-6,-0.5) circle (1pt);
					\filldraw[black] (-6,-0.7) circle (1pt);
					\filldraw[black] (-6,-1.5) circle (3pt);
					\filldraw[black] (-6,2.5) circle (3pt);
					\filldraw[black] (-6,3.5) circle (3pt);
					\filldraw[black] (-6,4.5) circle (3pt);
					\filldraw[black] (-5,4.5) circle (3pt);
					\filldraw[black] (-5,2.5) circle (3pt);
					\filldraw[black] (-5,0.7) circle (1pt);
					\filldraw[black] (-5,0.5) circle (1pt);
					\filldraw[black] (-5,0.3) circle (1pt);
					\filldraw[black] (-5,1.5) circle (3pt);
					\filldraw[black] (-5,-0.5) circle (3pt);
					\filldraw[black] (-2.2,3.5) circle (1pt);
					\filldraw[black] (-2,3.5) circle (1pt);
					\filldraw[black] (-1.8,3.5) circle (1pt);
					\filldraw[black] (-1,3.5) circle (3pt);
					\filldraw[black] (-3,3.5) circle (3pt);
					\filldraw[black] (-5,3.5) circle (3pt);
					\filldraw[black] (-5,4.5) circle (3pt);
					\filldraw[black] (-4,3.5) circle (3pt);
					\filldraw[black] (-4,4.5) circle (3pt);
					\filldraw[black] (-3,4.5) circle (3pt);
					\filldraw[black] (-3,4.5) circle (3pt);
					\filldraw[black] (-1.2,4.5) circle (1pt);
					\filldraw[black] (-1,4.5) circle (1pt);
					\filldraw[black] (-0.8,4.5) circle (1pt);
					\filldraw[black] (0,4.5) circle (3pt);
					\filldraw[black] (-2,4.5) circle (3pt);
					\draw (-6,4.5) -- (-3,4.5);
					\draw (-6,4.5) -- (-6,1.5);
					\draw (-3,4.5) -- (-6,1.5);
					\node at (-1,5.3) {m};
					\node at (-2,2.7) {$\ell$};
					\node at (-4.1,0.5) {r};
					\node at (-6.8,-0.5) {s};
					\draw [decorate, 
					decoration = {calligraphic brace,
						raise=0pt,
						aspect=0.5}] (-2,4.9) --  (0,4.9);
					\draw [decorate, 
					decoration = {calligraphic brace, mirror, 
						raise=0pt,
						aspect=0.55}] (-3,3.1) --  (-1,3.1);
					\draw [decorate, 
					decoration = {calligraphic brace, mirror, 
						raise=0pt,
						aspect=0.5}] (-4.5,-0.5) --  (-4.5,1.5);
					\draw [decorate, 
					decoration = {calligraphic brace,
						raise=0pt,
						aspect=0.5}] (-6.4,-1.5) --  (-6.4,0.5);
					\endscope
				\end{tikzpicture}
				\caption{Type B}
			\end{subfigure}
			\\
			~ 
			\begin{subfigure}[t]{0.3\textwidth}
								\centering						\begin{tikzpicture}[description/.style={fill=white,inner sep=2pt}]
									\useasboundingbox (-5,0) rectangle (0.5,4.5);
									\scope[transform canvas={scale=0.75}]
									\filldraw[black] (-6,1.5) circle (3pt);
									\filldraw[black] (-6,2.5) circle (3pt);
									\filldraw[black] (-6,3.5) circle (3pt);
									\filldraw[black] (-6,4.5) circle (3pt);
									\filldraw[black] (-5,4.5) circle (3pt);
									\filldraw[black] (-5,2.5) circle (3pt);
									\filldraw[black] (-3.2,2.5) circle (1pt);
									\filldraw[black] (-3,2.5) circle (1pt);
									\filldraw[black] (-2.8,2.5) circle (1pt);
									\filldraw[black] (-4,2.5) circle (3pt);
									\filldraw[black] (-2,2.5) circle (3pt);
									\filldraw[black] (-4.2,1.5) circle (1pt);
									\filldraw[black] (-4,1.5) circle (1pt);
									\filldraw[black] (-3.8,1.5) circle (1pt);
									\filldraw[black] (-5,1.5) circle (3pt);
									\filldraw[black] (-3,1.5) circle (3pt);
									\filldraw[black] (-2.2,3.5) circle (1pt);
									\filldraw[black] (-2,3.5) circle (1pt);
									\filldraw[black] (-1.8,3.5) circle (1pt);
									\filldraw[black] (-1,3.5) circle (3pt);
									\filldraw[black] (-3,3.5) circle (3pt);
									\filldraw[black] (-5,3.5) circle (3pt);
									\filldraw[black] (-5,4.5) circle (3pt);
									\filldraw[black] (-4,3.5) circle (3pt);
									\filldraw[black] (-4,4.5) circle (3pt);
									\filldraw[black] (-3,4.5) circle (3pt);
									\filldraw[black] (-3,4.5) circle (3pt);
									\filldraw[black] (-1.2,4.5) circle (1pt);
									\filldraw[black] (-1,4.5) circle (1pt);
									\filldraw[black] (-0.8,4.5) circle (1pt);
									\filldraw[black] (0,4.5) circle (3pt);
									\filldraw[black] (-2,4.5) circle (3pt);
									\draw (-6,4.5) -- (-3,4.5);
									\draw (-6,4.5) -- (-6,1.5);
									\draw (-3,4.5) -- (-6,1.5);
									\node at (-1,5.3) {m};
									\node at (-1.5,2.7) {$\ell$};
									\node at (-2.5,1.7) {r};
									\node at (-4,0.7) {s};
									\draw [decorate, 
									decoration = {calligraphic brace,
										raise=0pt,
										aspect=0.5}] (-2,4.9) --  (0,4.9);
									\draw [decorate, 
									decoration = {calligraphic brace, mirror, 
										raise=0pt,
										aspect=0.75}] (-3,3.1) --  (-1,3.1);
									\draw [decorate, 
									decoration = {calligraphic brace, mirror, 
										raise=0pt,
										aspect=0.75}] (-4,2.1) --  (-2,2.1);
									\draw [decorate, 
									decoration = {calligraphic brace, mirror, 
										raise=0pt,
										aspect=0.5}] (-5,1.1) --  (-3,1.1);
									\endscope		
								\end{tikzpicture}
								\caption{Type C}
							\end{subfigure}
			\caption{The three types of partitions with Durfee triangle of size $4$.}
			\label{R_4Pic}
		\end{figure*}
		\begin{align}
			\textup{The generating function of partitions of type A}&= q^{10} \sum_{s=0}^{\infty} q^s \sum_{m=0}^{\infty} q^m \sum_{\ell=0}^{m+1} q^\ell \sum_{r=0}^{\ell+1} q^r \notag \\
			&= \frac{q^{10}(1+2q-q^3-2q^4+q^6)}{(1-q)^4(1+q)(1+q+q^2)}. \label{F4TA} \\
			\textup{The generating function of partitions of type B}&= q^{10} \sum_{s=0}^{\infty} q^s \sum_{r=1}^{s+1} q^r \sum_{m=0}^{\infty} q^m \sum_{\ell=1}^{m+1} q^\ell \notag \\
			&= q^{10} \left( \sum_{s=0}^{\infty} q^s \sum_{r=1}^{s+1} q^r \right)^2 \notag \\
			&= \frac{q^{12}}{(1-q)^4(1+q)^2}. \label{F4TB} \\
			\textup{The generating function of partitions of type C}&= q^{10}  \sum_{m=0}^{\infty} q^m \sum_{\ell=1}^{m+1} q^\ell \sum_{r=1}^{\ell+1} q^r \sum_{s=1}^{r+1} q^s \notag \\
			&= \frac{q^{13}(1+2q+q^2-2q^4-2q^5-q^6+q^7+q^8)}{(1-q)^4(1+q)^2(1+q^2)(1+q+q^2)}. \label{F4TC}
		\end{align}
		
		Simplifications similar to the ones done in Theorem \ref{F3T} are done to get \eqref{F4TA}, \eqref{F4TB} and \eqref{F4TC}. The generating functions of type A$^c$ and C$^c$ are the same as that of type A and C, so we multiply \eqref{F4TA} and \eqref{F4TC} by 2. Since B$^c$ is the same type as B, we ignore B$^c$. The partitions which are of type both A and A$^c$ are exactly when $\ell=0$ in \eqref{F4TA}, hence we subtract them. The only partitions which are of type C and C$^c$ are $(4,4,3,2)$, $(4,4,4,2)$, $(4,4,3,3)$, $(4,4,4,3)$ and $(4,4,4,4)$. Hence, we subtract off $q^{13}$, $2q^{14}$, $q^{15}$ and $q^{16}$ to get the required generating function. Thus,
		\begin{align*}
			\mathscr{F}_4(q) &= 2 \left(  \frac{q^{10}(1+2q-q^3-2q^4+q^6)}{(1-q)^4(1+q)(1+q+q^2)} \right) - \frac{q^{10}(1+q)}{(1-q)^2} + \left( \frac{q^{12}}{(1-q)^4(1+q)^2} \right) \\
			& \quad +2 \left(  \frac{q^{13}(1+2q+q^2-2q^4-2q^5-q^6+q^7+q^8)}{(1-q)^4(1+q)^2(1+q^2)(1+q+q^2)} \right) - q^{13}-2q^{14} -q^{15} -q^{16}.
		\end{align*}
		Simplify to get \eqref{ScrF4GF}.
	\end{proof}
	Now, we give a combinatorial proof of the generating function of $R_5(n)$, with some details curbed to reduce repetition.
	\begin{theorem}\label{F5T}
		For $|q|<1$, we have,
		\begin{align}
			\mathscr{F}_5(q) &= \sum_{n=1}^{\infty} R_5(n) q^n \notag \\
			&= \frac{q^{15}}{(1-q)^2(1-q^3)(1-q^4)(1-q^5)}\bigg( 1+4q+6q^2+8q^3+8q^4+4q^5+4q^6-5q^7 -5q^8 -10q^9 \notag\\ 
			 &\quad   -7q^{10}-5q^{11}+4q^{13}+6q^{14}+7q^{15}+2q^{17} -4q^{18}-q^{19}-q^{20}-q^{21}+q^{22}-q^{23}+q^{24} \bigg). \label{ScrF5GF}
		\end{align}		
	\end{theorem}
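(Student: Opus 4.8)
The plan is to prove \eqref{ScrF5GF} by the same combinatorial route used in Theorems \ref{F3T} and \ref{F4T}, invoking the equivalence established in Theorem \ref{FGF} between partitions with max--rook number $5$ and partitions with a Durfee triangle of size $5$. Every such partition contains the staircase $(5,4,3,2,1)$, which accounts for the common factor $q^{t_5}=q^{15}$, and is determined by how its rows extend beyond this staircase. First I would sort these partitions into a small family of shape types: by analogy with the two types for size $3$ and the three for size $4$, I expect four types A, B, C, D together with their conjugates A$^{c}$, B$^{c}$, C$^{c}$, D$^{c}$, each type being specified by which of the five staircase rows are extended horizontally, whether there is a vertical tail of parts equal to $1$ below, and a chain of inequalities $s\le r+1$, $r\le \ell+1$, $\ell\le m+1,\dots$ coming from the non-increasing condition on the parts. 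The crucial point, exactly as in the earlier proofs, is that keeping the bottom of the staircase at its minimum---so that the fifth row has length $1$, or fewer than five rows are extended---is precisely the obstruction that forces the Durfee triangle to have size exactly $5$ rather than $6$.

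For each type I would write its generating function as a nested geometric sum over the extension parameters, in the shape of \eqref{R3TA}, \eqref{R3TB} and \eqref{F4TA}--\eqref{F4TC}, and evaluate it by repeatedly collapsing the innermost series through $\sum_{j=0}^{N}q^{j}=(1-q^{N+1})/(1-q)$. This produces, for each type, a rational function whose denominator is a product of factors $(1-q^{i})$ with $1\le i\le 5$; the full--chain type (all five rows extended, no tail) will give the denominator $(q;q)_5$, while the types carrying a free vertical tail will carry extra factors of $1/(1-q)$.

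Next I would combine the pieces using conjugation. The two extreme types---the one with four rows extended above a vertical tail, and the full--chain type with no tail---have conjugates of a genuinely different shape, so each is counted with multiplicity $2$; for the odd value $k=5$ the two intermediate types are conjugate to one another rather than self--conjugate, so they are grouped into one conjugate pair, in contrast with the self--conjugate middle type appearing at the even value $k=4$. I would then remove the overcounting by inclusion--exclusion: along each boundary where a type coincides with its conjugate (an innermost extension parameter taking the value $0$) I subtract the corresponding sub--series, and I subtract as explicit monomials the finitely many exceptional partitions lying simultaneously in a type and in its conjugate, in analogy with the subtraction of $q^{8},q^{9}$ in Theorem \ref{F3T} and of $q^{13},2q^{14},q^{15},q^{16}$ in Theorem \ref{F4T}.

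The main obstacle is the combinatorial bookkeeping of this decomposition at level $5$: correctly enumerating all the types and, above all, their pairwise overlaps is more delicate than at levels $3$ and $4$, because the inequality chains are longer, there are more types, and the self--conjugate versus conjugate--pair structure flips with the parity of $k$; in particular one must pin down the exact finite list of self--paired exceptional partitions to subtract. Once the decomposition is correct, the remainder is routine: adding the several rational functions over a common denominator and cancelling the superfluous cyclotomic factors, so that the denominator reduces to $(1-q)^2(1-q^3)(1-q^4)(1-q^5)$ once a common factor $1+q$ drops out, and then collecting terms to recover the degree--$24$ numerator in \eqref{ScrF5GF}. This last step is lengthy and error--prone but presents no conceptual difficulty, and is best verified by comparing low--order coefficients against the values of $R_5(n)$ in Table \ref{table R}.
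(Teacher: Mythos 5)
Your outline follows the paper's proof in essence: Theorem \ref{F5T} is proved there exactly by classifying the partitions with a Durfee triangle of size $5$ into shape types governed by chains of inequalities on extension parameters, doubling by conjugation, subtracting the overlaps, and simplifying the resulting rational functions against a common denominator. Two of your concrete anticipations, however, deviate from what actually works. First, the number of types does not grow as $2,3,4$: the paper again uses only \emph{three} types at size $5$ (Figure \ref{R_5Pic}) --- type A, the full chain $\ell\le m+1$, $r\le\ell+1$, $s\le r+1$, $p\le s+1$; type B, with the bottom parameter freed into a vertical tail $p\ge1$; and type C, with a free tail $p\ge1$ and the chain broken after $r$, the condition $1\le s\le p+1$ now attached to the tail --- and each of the three is doubled by conjugation, none being self-conjugate as a class (unlike type B at size $4$, which was counted once).

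Second, your plan to finish the inclusion--exclusion by subtracting ``finitely many exceptional partitions \dots as explicit monomials,'' in analogy with $q^{8},q^{9}$ at size $3$ and $q^{13},2q^{14},q^{15},q^{16}$ at size $4$, does not survive at size $5$. There the overlaps of each type with its conjugate are precisely the boundary families $m=0$ (in A), $\ell=0$ (in B) and $r=0$ (in C), and the latter two are \emph{infinite} families with genuinely rational generating functions \eqref{F5TBb} and \eqref{F5TCc}; only the A-overlap \eqref{F5TAa} is a polynomial, and the paper notes there are no sporadic coincidences beyond these three boundaries. So the correct bookkeeping is simply: double the three type series \eqref{F5TA}--\eqref{F5TC} and subtract the three boundary series \eqref{F5TAa}--\eqref{F5TCc}, with no extra finite correction list at all. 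Since your scheme already allows subtracting sub-series along boundaries where a parameter vanishes, it can be repaired to coincide with the paper's; but as written, searching for a fourth type, for intermediate types conjugate to one another, and for a finite list of self-paired exceptional partitions would be wasted effort, and the parity-of-$k$ mechanism you predict is not the one at work.
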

	\begin{proof}
		The partitions of a number with the Durfee triangle of size $4$ are at least of one of the type A, B, C, their conjugates A$^c$, B$^c$ or C$^c$, shown in Figure \ref{R_5Pic}. For type A, we take $m\geq0$, $\ell \leq m+1$, $r\leq \ell+1$, $s\leq r+1$ and $p\leq s+1$. For type B, we take $p\geq1$, $m\geq1$, $\ell\leq m+1$, $r\leq \ell+1$ and $s\leq r+1$, since  $p=0$ or $m=0$ gives a partition of type A or A$^c$ respectively. For type C, we take $m \geq 1$, $1\leq \ell \leq m+1$, $r\leq \ell+1$, $p\geq1$ and $1\leq s\leq p+1$, since $\ell=0$, $p=0$, $m=0$, and $s=0$ gives a partition of type B$^c$, A, A$^c$, and B respectively. We now find the generating functions of the types separately.  
		\begin{figure*}[h!]
			\centering
			\begin{subfigure}[t]{0.45\textwidth}
				\centering						
				\begin{tikzpicture}[description/.style={fill=white,inner sep=2pt}]
					\useasboundingbox (-5,-0.5) rectangle (1,4.5);
					\scope[transform canvas={scale=0.75}]
					\filldraw[black] (-6,1.5) circle (3pt);
					\filldraw[black] (-6,0.5) circle (3pt);
					\filldraw[black] (-6,2.5) circle (3pt);
					\filldraw[black] (-6,3.5) circle (3pt);
					\filldraw[black] (-6,4.5) circle (3pt);
					\filldraw[black] (-5,4.5) circle (3pt);
					\filldraw[black] (-5,2.5) circle (3pt);
					\filldraw[black] (-2.2,2.5) circle (1pt);
					\filldraw[black] (-2,2.5) circle (1pt);
					\filldraw[black] (-1.8,2.5) circle (1pt);
					\filldraw[black] (-3,2.5) circle (3pt);
					\filldraw[black] (-1,2.5) circle (3pt);
					\filldraw[black] (-4,2.5) circle (3pt);
					\filldraw[black] (-3.2,1.5) circle (1pt);
					\filldraw[black] (-3,1.5) circle (1pt);
					\filldraw[black] (-2.8,1.5) circle (1pt);
					\filldraw[black] (-5,1.5) circle (3pt);
					\filldraw[black] (-4,1.5) circle (3pt);
					\filldraw[black] (-2,1.5) circle (3pt);
					\filldraw[black] (-1.2,3.5) circle (1pt);
					\filldraw[black] (-1,3.5) circle (1pt);
					\filldraw[black] (-0.8,3.5) circle (1pt);
					\filldraw[black] (0,3.5) circle (3pt);
					\filldraw[black] (-2,3.5) circle (3pt);
					\filldraw[black] (-3,3.5) circle (3pt);
					\filldraw[black] (-5,3.5) circle (3pt);
					\filldraw[black] (-5,4.5) circle (3pt);
					\filldraw[black] (-4,3.5) circle (3pt);
					\filldraw[black] (-4,4.5) circle (3pt);
					\filldraw[black] (-3,4.5) circle (3pt);
					\filldraw[black] (-3,4.5) circle (3pt);
					\filldraw[black] (-0.2,4.5) circle (1pt);
					\filldraw[black] (0,4.5) circle (1pt);
					\filldraw[black] (0.2,4.5) circle (1pt);
					\filldraw[black] (1,4.5) circle (3pt);
					\filldraw[black] (-1,4.5) circle (3pt);
					\filldraw[black] (-2,4.5) circle (3pt);
					\filldraw[black] (-4.2,0.5) circle (1pt);
					\filldraw[black] (-4,0.5) circle (1pt);
					\filldraw[black] (-3.8,0.5) circle (1pt);
					\filldraw[black] (-5,0.5) circle (3pt);
					\filldraw[black] (-3,0.5) circle (3pt);
					\draw (-6,4.5) -- (-2,4.5);
					\draw (-6,4.5) -- (-6,0.5);
					\draw (-2,4.5) -- (-6,0.5);
					\node at (0,5.3) {m};
					\node at (-0.5,2.7) {$\ell$};
					\node at (-1.5,1.7) {r};
					\node at (-2.5,0.7) {s};
					\node at (-4,-0.3) {p};
 					\draw [decorate, 
					decoration = {calligraphic brace,
						raise=0pt,
						aspect=0.5}] (-1,4.9) --  (1,4.9);
					\draw [decorate, 
					decoration = {calligraphic brace, mirror, 
						raise=0pt,
						aspect=0.75}] (-2,3.1) --  (0,3.1);
					\draw [decorate, 
					decoration = {calligraphic brace, mirror, 
						raise=0pt,
						aspect=0.75}] (-3,2.1) --  (-1,2.1);
					\draw [decorate, 
					decoration = {calligraphic brace, mirror, 
						raise=0pt,
						aspect=0.75}] (-4,1.1) --  (-2,1.1);
					\draw [decorate, 
					decoration = {calligraphic brace, mirror, 
						raise=0pt,
						aspect=0.5}] (-5,0.1) --  (-3,0.1);
					\endscope		
				\end{tikzpicture}
				\caption{Type A}
			\end{subfigure}
			\begin{subfigure}[t]{0.45\textwidth}
				\centering
				\begin{tikzpicture}[description/.style={fill=white,inner sep=2pt}]
					\useasboundingbox (-5.5,-2) rectangle (1,4.5);
					\scope[transform canvas={scale=0.75}]
					\filldraw[black] (-6,1.5) circle (3pt);
					\filldraw[black] (-6,0.5) circle (3pt);
					\filldraw[black] (-6,2.5) circle (3pt);
					\filldraw[black] (-6,3.5) circle (3pt);
					\filldraw[black] (-6,4.5) circle (3pt);
					\filldraw[black] (-5,4.5) circle (3pt);
					\filldraw[black] (-5,2.5) circle (3pt);
					\filldraw[black] (-2.2,2.5) circle (1pt);
					\filldraw[black] (-2,2.5) circle (1pt);
					\filldraw[black] (-1.8,2.5) circle (1pt);
					\filldraw[black] (-3,2.5) circle (3pt);
					\filldraw[black] (-1,2.5) circle (3pt);
					\filldraw[black] (-4,2.5) circle (3pt);
					\filldraw[black] (-3.2,1.5) circle (1pt);
					\filldraw[black] (-3,1.5) circle (1pt);
					\filldraw[black] (-2.8,1.5) circle (1pt);
					\filldraw[black] (-5,1.5) circle (3pt);
					\filldraw[black] (-4,1.5) circle (3pt);
					\filldraw[black] (-2,1.5) circle (3pt);
					\filldraw[black] (-1.2,3.5) circle (1pt);
					\filldraw[black] (-1,3.5) circle (1pt);
					\filldraw[black] (-0.8,3.5) circle (1pt);
					\filldraw[black] (0,3.5) circle (3pt);
					\filldraw[black] (-2,3.5) circle (3pt);
					\filldraw[black] (-3,3.5) circle (3pt);
					\filldraw[black] (-5,3.5) circle (3pt);
					\filldraw[black] (-5,4.5) circle (3pt);
					\filldraw[black] (-4,3.5) circle (3pt);
					\filldraw[black] (-4,4.5) circle (3pt);
					\filldraw[black] (-3,4.5) circle (3pt);
					\filldraw[black] (-3,4.5) circle (3pt);
					\filldraw[black] (-0.2,4.5) circle (1pt);
					\filldraw[black] (0,4.5) circle (1pt);
					\filldraw[black] (0.2,4.5) circle (1pt);
					\filldraw[black] (1,4.5) circle (3pt);
					\filldraw[black] (-1,4.5) circle (3pt);
					\filldraw[black] (-2,4.5) circle (3pt);
					\filldraw[black] (-6,-1.3) circle (1pt);
					\filldraw[black] (-6,-1.5) circle (1pt);
					\filldraw[black] (-6,-1.7) circle (1pt);
					\filldraw[black] (-6,-0.5) circle (3pt);
					\filldraw[black] (-6,-2.5) circle (3pt);
					\draw (-6,4.5) -- (-2,4.5);
					\draw (-6,4.5) -- (-6,0.5);
					\draw (-2,4.5) -- (-6,0.5);
					\node at (0,5.3) {m};
					\node at (-0.5,2.7) {$\ell$};
					\node at (-1.5,1.7) {r};
					\node at (-3,0.7) {s};
					\node at (-6.6,-1.5) {p};
					\draw [decorate, 
					decoration = {calligraphic brace,
						raise=0pt,
						aspect=0.5}] (-1,4.9) --  (1,4.9);
					\draw [decorate, 
					decoration = {calligraphic brace, mirror, 
						raise=0pt,
						aspect=0.75}] (-2,3.1) --  (0,3.1);
					\draw [decorate, 
					decoration = {calligraphic brace, mirror, 
						raise=0pt,
						aspect=0.75}] (-3,2.1) --  (-1,2.1);
					\draw [decorate, 
					decoration = {calligraphic brace, mirror, 
						raise=0pt,
						aspect=0.5}] (-4,1.1) --  (-2,1.1);
					\draw [decorate, 
					decoration = {calligraphic brace, mirror, 
						raise=0pt,
						aspect=0.5}] (-6.3,-0.5) --  (-6.3,-2.5);
					\endscope
				\end{tikzpicture}
				\caption{Type B}
			\end{subfigure}
			\\
			\begin{subfigure}[t]{0.45\textwidth}
				\centering
				\begin{tikzpicture}[description/.style={fill=white,inner sep=2pt}]
					\useasboundingbox (-5.5,-2) rectangle (1.5,4.5);
					\scope[transform canvas={scale=0.75}]
					\filldraw[black] (-6,1.5) circle (3pt);
					\filldraw[black] (-6,0.5) circle (3pt);
					\filldraw[black] (-6,2.5) circle (3pt);
					\filldraw[black] (-6,3.5) circle (3pt);
					\filldraw[black] (-6,4.5) circle (3pt);
					\filldraw[black] (-5,4.5) circle (3pt);
					\filldraw[black] (-5,2.5) circle (3pt);
					\filldraw[black] (-2.2,2.5) circle (1pt);
					\filldraw[black] (-2,2.5) circle (1pt);
					\filldraw[black] (-1.8,2.5) circle (1pt);
					\filldraw[black] (-3,2.5) circle (3pt);
					\filldraw[black] (-1,2.5) circle (3pt);
					\filldraw[black] (-4,2.5) circle (3pt);
					\filldraw[black] (-5,-0.3) circle (1pt);
					\filldraw[black] (-5,-0.5) circle (1pt);
					\filldraw[black] (-5,-0.7) circle (1pt);
					\filldraw[black] (-5,1.5) circle (3pt);
					\filldraw[black] (-5,0.5) circle (3pt);
					\filldraw[black] (-5,-1.5) circle (3pt);
					\filldraw[black] (-1.2,3.5) circle (1pt);
					\filldraw[black] (-1,3.5) circle (1pt);
					\filldraw[black] (-0.8,3.5) circle (1pt);
					\filldraw[black] (0,3.5) circle (3pt);
					\filldraw[black] (-2,3.5) circle (3pt);
					\filldraw[black] (-3,3.5) circle (3pt);
					\filldraw[black] (-5,3.5) circle (3pt);
					\filldraw[black] (-5,4.5) circle (3pt);
					\filldraw[black] (-4,3.5) circle (3pt);
					\filldraw[black] (-4,4.5) circle (3pt);
					\filldraw[black] (-3,4.5) circle (3pt);
					\filldraw[black] (-3,4.5) circle (3pt);
					\filldraw[black] (-0.2,4.5) circle (1pt);
					\filldraw[black] (0,4.5) circle (1pt);
					\filldraw[black] (0.2,4.5) circle (1pt);
					\filldraw[black] (1,4.5) circle (3pt);
					\filldraw[black] (-1,4.5) circle (3pt);
					\filldraw[black] (-2,4.5) circle (3pt);
					\filldraw[black] (-6,-1.3) circle (1pt);
					\filldraw[black] (-6,-1.5) circle (1pt);
					\filldraw[black] (-6,-1.7) circle (1pt);
					\filldraw[black] (-6,-0.5) circle (3pt);
					\filldraw[black] (-6,-2.5) circle (3pt);
					\draw (-6,4.5) -- (-2,4.5);
					\draw (-6,4.5) -- (-6,0.5);
					\draw (-2,4.5) -- (-6,0.5);
					\node at (0,5.3) {m};
					\node at (-0.5,2.7) {$\ell$};
					\node at (-2,1.7) {r};
					\node at (-4.4,-0.5) {s};
					\node at (-6.6,-1.5) {p};
					\draw [decorate, 
					decoration = {calligraphic brace,
						raise=0pt,
						aspect=0.5}] (-1,4.9) --  (1,4.9);
					\draw [decorate, 
					decoration = {calligraphic brace, mirror, 
						raise=0pt,
						aspect=0.75}] (-2,3.1) --  (0,3.1);
					\draw [decorate, 
					decoration = {calligraphic brace, mirror, 
						raise=0pt,
						aspect=0.5}] (-3,2.1) --  (-1,2.1);
					\draw [decorate, 
					decoration = {calligraphic brace,  
						raise=0pt,
						aspect=0.5}] (-4.7,0.5) --  (-4.7,-1.5);
					\draw [decorate, 
					decoration = {calligraphic brace, mirror, 
						raise=0pt,
						aspect=0.5}] (-6.3,-0.5) --  (-6.3,-2.5);
					\endscope
				\end{tikzpicture}
				\caption{Type C}
			\end{subfigure}
			\caption{The three types of partitions with Durfee triangle of size $5$.}
			\label{R_5Pic}
		\end{figure*}
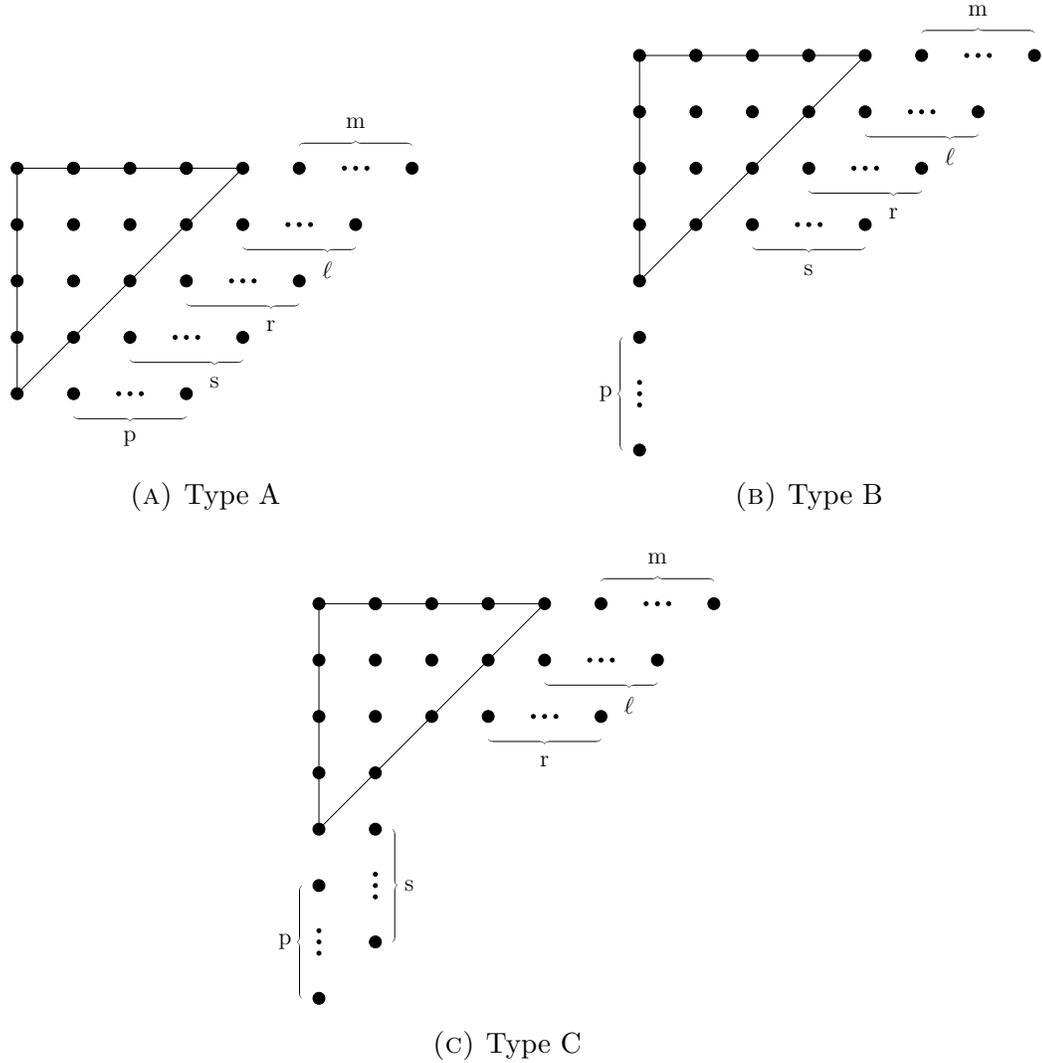
		\begin{align}
			&\textup{The generating function of partitions of type A}= q^{15} \sum_{m=0}^{\infty} q^m \sum_{\ell=0}^{m+1} q^{\ell} \sum_{r=0}^{\ell+1} q^r \sum_{s=0}^{r+1} q^s \sum_{p=0}^{s+1} q^p \notag \\
			&=\frac{q^{15}}{(1-q)^3(1+q)^2(1+q^2)(1-q^3)(1-q^5)} \bigg(1+4q+5q^2+3q^3-q^4-8q^5-9q^6-8q^7-2q^8+5q^9 \notag \\ 
			&\hspace{5.5 cm}+9q^{10}+7q^{11}+3q^{12}-q^{13}-4q^{14}-2q^{15}-2q^{16}+q^{20}\bigg). \label{F5TA}
		\end{align}
		\begin{align}
			&\textup{The generating function of partitions of type B}= q^{15} \sum_{m=1}^{\infty} q^m \sum_{\ell=0}^{m+1} q^{\ell} \sum_{r=0}^{\ell+1} q^r \sum_{s=0}^{r+1} q^s \sum_{p=1}^{\infty} q^p \notag \\
			&\hspace{1 cm} =\frac{q^{16}(q+3q^2+3q^3+q^4-3q^5-5q^6-4q^7-q^8+2q^9+3q^{10}+q^{11}+q^{12}-q^{16})}{(1-q)^4(1+q)^2(1+q^2)(1-q^3)}.  \label{F5TB}
		\end{align}
		\begin{align}
			&\textup{The generating function of partitions of type C}= q^{15} \sum_{m=1}^{\infty} q^m \sum_{\ell=1}^{m+1} q^{\ell} \sum_{r=0}^{\ell+1} q^r \sum_{p=1}^{\infty} q^p \sum_{s=1}^{p+1} q^s \notag \\
			&\hspace{4 cm} =\frac{q^{17}(q^2+3q^3+2q^4-2q^5-4q^6-2q^7+q^8+2q^9+q^{10}-q^{11})}{(1-q)^4(1+q)^2(1-q^3)}. \label{F5TC}
		\end{align}
		Simplifications similar to the ones done in Theorem \ref{F3T} are done to get \eqref{F5TA}, \eqref{F5TB} and \eqref{F5TC}. The partitions which are of type both A and A$^c$ are exactly when $m=0$ in type A. The partitions which are of type both B and B$^c$ are exactly when $\ell=0$ in type B. The partitions which are of type both C and C$^c$ are exactly when $r=0$ in type C. There are no possible repetitions apart from these three cases due to the definitions of the three types.
		\begin{align}
				&\textup{The generating function of partitions of type both A and A}^c \notag \\
				&= q^{15}\big( 1+4q+6q^2+7q^3+7q^4+5q^5+5q^6+3q^7+2q^8+q^9+q^{10} \big). \label{F5TAa}\\
				&\textup{The generating function of partitions of type both B and B}^c=   \frac{q^{17}(1+q-2q^2-q^3+q^6)}{(1-q)^4(1+q)}. \label{F5TBb}\\
				&\textup{The generating function of partitions of type both C and C}^c= \frac{q^{19}(1+q-q^2)^2}{(1-q)^4(1+q)^2}. \label{F5TCc}
		\end{align}
		The generating functions of type A$^c$, B$^c$ and C$^c$ are the same as that of type A, B and C respectively, so we multiply \eqref{F5TA}, \eqref{F5TB} and \eqref{F5TC} by 2. The generating functions of the partitions which are repeated are \eqref{F5TAa}, \eqref{F5TBb} and \eqref{F5TCc}, hence we subtract them off to get the required generating function \eqref{ScrF5GF}.
	\end{proof}
	\section{Exact formulas and Periodicity}\label{SecExa}
	In this Section, we obtain the exact formulas for $R_3(n)$ and $R_4(n)$ and further get their growth as $n \to \infty$. Due to the absence of infinite products in the generating functions, their exact formulas can be obtained, which directly can be used to obtain their growth rates without using the classical Circle method. Both of them have a polynomial growth rate unlike the Partition function which grows sub-exponentially. We prove the polynomial growth rates of $R_3(n)$ and $R_4(n)$ in Corollaries \ref{CorR3G} and \ref{CorR4G} respectively. The exact formula also gives the periodicity modulo $p$ for $p \in \mathbb{N}$ and $p\geq2$. 
	
	Let us define the basic notation before proving the Theorems. Let $f(x)$ and $g(x)$ be two real-valued functions. We say $f(x)\sim g(x)$ as $x\to \infty$ if,
	\begin{align*}
		\lim_{x\to \infty} \left(\frac{f(x)}{g(x)}\right)=1 .
	\end{align*}
	
	Let $f_1(q)$ and $f_2(q)$ be the generating functions of $a_1(n)$ and $a_2(n)$ respectively. Then, we say, $f_1$ and $f_2$ are congruent modulo $p$, i.e. $f_1(q) \equiv f_2(q) \textup{ (mod p)}$ if and only if $a_1(n) \equiv a_2(n) \textup{ (mod p)}$ for all $n \in \mathbb{N}$. We will use this notation in this paper hereon. We also use the exact formulas to obtain the periodic congruences modulo $p$ for any $p \in \mathbb{N}$ and $p\geq2$.
	\begin{theorem}\label{R3thmexk} 
		For $n>9$, such that $n=3m+a$ where $0\leq a\leq 2$, then, we have
		\begin{align}\label{R3formula}
			R_3(n) &=
			\begin{cases}
				6m^2 -15m+7, & \textup{if $a=0$,}\\
				6m^2 -11m+2, & \textup{if $a=1$,}\\
				6m^2 -7m-1, & \textup{if $a=2$.}
			\end{cases}
		\end{align}
		Further, for $n>9$ and for any fixed $p \in \mathbb{N}$ and $p\geq2$, $R_3(n) \textup{ mod }(p)$ is periodic in $n$ with period $3p$, i.e.,
		\begin{align*}
			R_3(n+3p) \equiv R_3(n) \textup{ mod } (p).
		\end{align*}
	\end{theorem}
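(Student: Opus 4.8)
The plan is to read off the exact formula directly from the generating function $\mathscr{F}_3(q)$ of Theorem \ref{F3T} via a partial fraction decomposition, and then obtain the periodicity as an almost immediate consequence. First I would rewrite the denominator as $(1-q)^3(1+q+q^2) = (1-q)^2(1-q^3)$, which lays bare the pole structure: a pole of order three at $q=1$ together with simple poles at the primitive cube roots of unity $\omega,\omega^2$. Since the numerator $q^6(1+2q+q^2+2q^3-q^4-q^6-q^7+q^8)$ has degree $14$ while the denominator has degree $5$, the rational function is improper, so I would first split off the polynomial part and write
\begin{align*}
\mathscr{F}_3(q) = P(q) + \frac{A}{(1-q)^3} + \frac{B}{(1-q)^2} + \frac{C}{1-q} + \frac{Dq+E}{1+q+q^2},
\end{align*}
where $P(q)$ is a polynomial of degree $14-5 = 9$ and $A,B,C,D,E$ are constants found by clearing denominators and matching coefficients.

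Next I would extract coefficients termwise using the standard expansions $\frac{1}{(1-q)^3} = \sum_n \binom{n+2}{2} q^n$, $\frac{1}{(1-q)^2} = \sum_n (n+1) q^n$, and $\frac{1}{1-q} = \sum_n q^n$, while the remaining summand $\frac{Dq+E}{1+q+q^2} = \frac{(Dq+E)(1-q)}{1-q^3}$ contributes a sequence periodic with period $3$. For $n > 9$ the polynomial part $P(q)$ contributes nothing, so $R_3(n)$ is a fixed quadratic in $n$ plus a period-$3$ correction; this is exactly the source of the threshold $n>9$, since $P(q)$ has degree $9$. Writing $n = 3m+a$ with $0 \le a \le 2$ absorbs the periodic correction into the constant term and turns the quadratic in $n$ into a quadratic in $m$, yielding the three cases of \eqref{R3formula}. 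As a sanity check, the resulting formula reproduces the tabulated values (e.g.\ $R_3(12)=43$, $R_3(15)=82$) and fails at $n=9$, consistent with the stated range.

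For the congruence, I would observe that within each residue class $a$ the formula expresses $R_3(n)$ as a polynomial $f_a(m)$ in $m$ with \emph{integer} coefficients, where $n=3m+a$. Shifting $n \mapsto n+3p$ fixes the residue $a$ and replaces $m$ by $m+p$, so it suffices to show $f_a(m+p) \equiv f_a(m) \pmod p$. This is the elementary fact that $(m+p)^k \equiv m^k \pmod p$ for every $k\ge 0$, since $(m+p)^k - m^k$ is a multiple of $p$; hence $f_a(m+p)-f_a(m)$ is divisible by $p$, giving $R_3(n+3p)\equiv R_3(n)\pmod p$. Note that $p\ge 2$ guarantees $n+3p > n > 9$, so both arguments lie in the valid range.

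The routine-but-delicate step, and the only real obstacle, is the partial fraction bookkeeping: correctly peeling off the degree-$9$ polynomial part, solving for $A,B,C,D,E$, and then reorganizing the $n$-quadratic plus period-$3$ term into the clean $m$-quadratics. Once the exact formula is secured, the periodicity is immediate, so the difficulty is purely algebraic rather than conceptual. An alternative route avoiding partial fractions would use the five-term recurrence of Corollary \ref{R3RecCor}, whose characteristic polynomial factors as $(x-1)^3(x^2+x+1)$, producing the same general shape $an^2+bn+c$ plus a period-$3$ term, to be fitted against the initial data in Table \ref{table R}.
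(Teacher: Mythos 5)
Your proposal is correct and takes essentially the same route as the paper: a partial fraction decomposition of $\mathscr{F}_3(q)$ with a degree-$9$ polynomial part (the source of the threshold $n>9$), coefficient extraction via the binomial expansions of $(1-q)^{-r}$ split into residue classes modulo $3$, and the same elementary fact that an integer-coefficient polynomial satisfies $f(m+p)\equiv f(m) \pmod{p}$ to get periodicity with period $3p$. The only difference is cosmetic: the paper writes the period-$3$ contribution as $\frac{2(2+q)(1-q)}{9(1-q^3)}$ rather than your $\frac{Dq+E}{1+q+q^2}$.
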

	\begin{proof}
		From \eqref{ScrF3GF}, we have
		\begin{align}
			&\sum_{n=1}^{\infty} R_3(n) q^n \notag \\
			&= \frac{q^6(1+2q+q^2+2q^3-q^4-q^6-q^7+q^8)}{(1-q)^3(1+q+q^2)} \notag \\
			&= -7 -2q +q^2 +2q^3 +3q^4 +2q^5 -q^8 -q^9 +\frac{4}{3(1-q)^3}- \frac{7}{(1-q)^2} + \frac{110}{9 (1-q)} +  \frac{2(2+q)(1-q)}{9(1-q^3)}. \label{3inter}
		\end{align}		
		From the binomial theorem, for any $r \in \mathbb{N}$ and $|x|<1$, we have,
		\begin{align}
			(1-x)^{-r} = \sum_{k=0}^{\infty} \frac{(r)_k}{k!}x^k, \label{binom}
		\end{align}
		where $(r)_k$ is the Pochhammer symbol defined by $(r)_k:=(r)(r+1)\cdots (r+k-1)= \frac{\Gamma(r+k)}{\Gamma(r)}$.
		Using \eqref{binom} repeatedly with appropriate arguments, we get,
		\begin{align}
			&\frac{4}{3(1-q)^3}- \frac{7}{(1-q)^2} + \frac{110}{9 (1-q)} +  \frac{2(2+q)(1-q)}{9(1-q^3)} \notag \\
			&=\frac{2}{3} \sum_{k=0}^{\infty} (k+2)(k+1) q^k -  7\sum_{k=0}^{\infty}(k+1)q^k + \frac{110}{9} \sum_{k=0}^{\infty} q^k- \frac{2}{9} (q^2+q-2)  \sum_{k=0}^{\infty} q^{3k} \notag \\
			&= \sum_{k=0}^{\infty} \left( \frac{2}{3} (k+2)(k+1) -7(k+1) + \frac{110}{9} \right) q^k - \frac{2}{9} (q^2+q-2)  \sum_{k=0}^{\infty} q^{3k} \notag \\
			&= \sum_{k=0}^{\infty} \left( \frac{2}{3} k^2 -5k + \frac{59}{9} \right) q^k - \frac{2}{9} (q^2+q-2)  \sum_{k=0}^{\infty} q^{3k}\notag \\
			&= \sum_{m=0}^{\infty} \left( \frac{2}{3} (3m)^2 -5(3m) + \frac{59}{9} \right) q^{3m} +\sum_{m=0}^{\infty} \left( \frac{2}{3} (3m+1)^2 -5(3m+1) + \frac{59}{9} \right) q^{3m+1} \notag\\
			&\quad + \sum_{m=0}^{\infty} \left( \frac{2}{3} (3m+2)^2 -5(3m+2) + \frac{59}{9} \right) q^{3m+2} - \frac{2}{9} (q^2+q-2)  \sum_{k=0}^{\infty} q^{3k} \notag\\
			&= \sum_{m=0}^{\infty} \left( 6m^2 -15m + \frac{59}{9} \right) q^{3m} +\sum_{m=0}^{\infty} \left( 6m^2 -11m + \frac{20}{9} \right) q^{3m+1} \notag\\
			&\quad + \sum_{m=0}^{\infty} \left( 6m^2 -7m - \frac{7}{9} \right) q^{3m+2} - \frac{2}{9} (q^2+q-2)  \sum_{k=0}^{\infty} q^{3k}\notag \\
			&= \sum_{m=0}^{\infty} \left( 6m^2 -15m + 7 \right) q^{3m} +\sum_{m=0}^{\infty} \left( 6m^2 -11m + 2 \right) q^{3m+1} + \sum_{m=0}^{\infty} \left( 6m^2 -7m - 1 \right) q^{3m+2}. \label{3fin}
		\end{align}
		From \eqref{3inter} and \eqref{3fin}, we get the required exact formula for $R_3(n)$ for $n>9$. For any polynomial with integer coefficients, say $g(x):=a_0 + a_1 x + \cdots a_m x^m$, we can see that
		\begin{align*}
			g(x+p) &= a_0 + a_1 (x+p) + \cdots a_m (x+p)^m\\
			&\equiv (a_0 + a_1 x + \cdots a_m x^m) \textup{ mod (p)}\\
			&\equiv g(x) \textup{ mod (p)},
		\end{align*}
		since $(x+p)^k \equiv x^k$ mod $(p)$, for any $p \in \mathbb{N}$ such that $p\geq2$ and $k \in \mathbb{N}$, from binomial theorem. From \eqref{3inter} \eqref{3fin}, for $m>4$, we have seen that $R_3(3m) = 6m^2 -15m+7$ is a polynomial and hence $R_3(3(m+p)) \equiv R_3(3m) \textup{ mod (p)}$ for any $p \in \mathbb{N}$, $ p\geq2$. This same argument can be used for the other congruency classes to show that $R_3(n) \textup{ mod (p)}$ is a periodic function in $n$ with a period $3p$.
	\end{proof}
	\begin{corollary}\label{CorR3G}
		As $n \to \infty$, we have,
		\begin{align*}
			R_3(n) \sim \frac{2}{3}n^2.
		\end{align*}
	\end{corollary}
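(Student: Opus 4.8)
The plan is to read the asymptotics straight off the exact formula established in Theorem \ref{R3thmexk}. Since that theorem already expresses $R_3(n)$ as an explicit quadratic in $m$ on each residue class $n \equiv a \pmod{3}$, with $m = (n-a)/3$, the corollary reduces to a single limit computation and requires no new combinatorial or generating-function input.

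First I would fix a residue $a \in \{0,1,2\}$ and write $n = 3m + a$, so that $m = (n-a)/3$ and $m \to \infty$ precisely when $n \to \infty$. Substituting into each of the three cases of \eqref{R3formula}, the dominant contribution is $6m^2$ in every case, and replacing $m$ by $(n-a)/3$ converts this into $6 \cdot \tfrac{(n-a)^2}{9} = \tfrac{2}{3}(n-a)^2 = \tfrac{2}{3}n^2 + O(n)$. The remaining terms, namely $-15m+7$, $-11m+2$, and $-7m-1$, are each $O(m) = O(n)$, so in all three cases one obtains $R_3(n) = \tfrac{2}{3}n^2 + O(n)$.

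I would then combine the three classes: since the estimate $R_3(n) = \tfrac{2}{3}n^2 + O(n)$ holds uniformly over $a \in \{0,1,2\}$, dividing by $\tfrac{2}{3}n^2$ and letting $n \to \infty$ yields $R_3(n)/\big(\tfrac{2}{3}n^2\big) \to 1$, which is exactly the claim $R_3(n) \sim \tfrac{2}{3}n^2$. There is no genuine obstacle here; the entire content lies in the exact formula of Theorem \ref{R3thmexk}, and the only point demanding a little care is to treat all three residue classes simultaneously so that the $O(n)$ error term is truly uniform and does not conceal a larger contribution in any one case.
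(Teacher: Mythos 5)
Your proposal is correct and follows essentially the same route as the paper: substituting $m=(n-a)/3$ into the exact formula of Theorem \ref{R3thmexk} on each residue class and noting that the leading term $6m^2=\tfrac{2}{3}(n-a)^2$ dominates, with all three classes sharing the same asymptotic growth. Your explicit remark that the $O(n)$ error is uniform over the residue classes is a slightly more careful formulation of the paper's closing observation, but there is no substantive difference in the argument.
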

	\begin{proof}
		From Theorem \ref{R3thmexk}, we have,
		\begin{align*}
			R_3(3m) = 6m^2 - 15m +7.
		\end{align*}
	Replace $m$ by $\frac{n}{3}$ in the above equation to get the required result for the case $n=3m$. Then, repeat the above step for the other congruency classes modulo $3$ in Theorem \ref{R3thmexk} accordingly. Hence, $R_3(n)$ grows like $\frac{2}{3}n^2$ asymptotically, since, all the congruence classes have the identical asymptotic growth. 
	\end{proof}
	Next, we prove the analogous exact formula for $R_4(n)$.
	\begin{theorem}\label{R4thmexk} For $n>16$, such that $n=12m+a$ where $0\leq a\leq 11$, then, we have
		\begin{align}\label{R4formula}
			R_4(n) &=
			\begin{cases}
				192m^3 -264m^2+87m-3, & \textup{if $a=0$,}\\
				192m^3 -216m^2+44m+5, & \textup{if $a=1$,}\\
				192m^3 -168m^2+15m+5, & \textup{if $a=2$,}\\
				192m^3 -120m^2-12m+7, & \textup{if $a=3$,}\\
				192m^3 -72m^2-25m+4, & \textup{if $a=4$,}\\
				192m^3 -24m^2-36m+3, & \textup{if $a=5$,}\\
				192m^3 +24m^2-33m-2, & \textup{if $a=6$,}\\
				192m^3 +72m^2-28m-3, & \textup{if $a=7$,}\\
				192m^3 +120m^2-9m-5, & \textup{if $a=8$,}\\
				192m^3 +168m^2+12m-5, & \textup{if $a=9$,}\\
				192m^3 +216m^2+47m-3, & \textup{if $a=10$,}\\
				192m^3 +264m^2+84m+3, & \textup{if $a=11$,}\\
			\end{cases}
		\end{align}
		Further, for $n>16$ and for any fixed $p \in \mathbb{N}$ and $p\geq2$, $R_4(n) \textup{ mod }(p)$ is periodic in $n$ with period $12p$, i.e.,
		\begin{align*}
			R_4(n+12p) \equiv R_4(n) \textup{ mod } (p).
		\end{align*}
	\end{theorem}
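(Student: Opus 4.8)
The plan is to follow closely the strategy used for Theorem~\ref{R3thmexk}, the only difference being that the single cyclotomic factor $1+q+q^2$ appearing there is now replaced by the several proper factors coming from the denominator of \eqref{ScrF4GF}. Writing the denominator as
\begin{align*}
(1-q)(1-q^2)(1-q^3)(1-q^4) = (1-q)^4(1+q)^2(1+q+q^2)(1+q^2),
\end{align*}
I would first carry out the partial fraction decomposition of $\mathscr{F}_4(q)$. Since the numerator in \eqref{ScrF4GF} has degree $26$ and the denominator degree $10$, this yields an explicit polynomial part of degree $16$ together with a proper rational part of the shape
\begin{align*}
\frac{A}{(1-q)^4} + \frac{B}{(1-q)^3} + \frac{C}{(1-q)^2} + \frac{D}{1-q} + \frac{E}{(1+q)^2} + \frac{F}{1+q} + \frac{aq+b}{1+q+q^2} + \frac{cq+d}{1+q^2},
\end{align*}
exactly as \eqref{3inter} splits the $R_3$ generating function into a polynomial plus a sum of simple fractions. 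A quick residue computation gives $A=\lim_{q\to1}(1-q)^4\mathscr{F}_4(q)=\tfrac{2}{3}$, which is the source of the common leading term below.

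Next, I would expand each summand as a power series. Using the binomial theorem \eqref{binom}, the term $A/(1-q)^4$ contributes $A\binom{k+3}{3}$ to the coefficient of $q^k$, a cubic polynomial in $k$, while the $(1-q)^{-3}$, $(1-q)^{-2}$ and $(1-q)^{-1}$ terms contribute the lower-degree polynomial parts. The factor $(1+q)^2$ produces a contribution of the form $(-1)^k(\alpha k+\beta)$, the factor $1+q+q^2$ a bounded contribution of period $3$, and $1+q^2$ a bounded contribution of period $4$. Because $\operatorname{lcm}(2,3,4)=12$, all of these periodic pieces stabilise once $n$ is fixed modulo $12$. Collecting the coefficient of $q^n$ for $n=12m+a$ with $0\le a\le 11$ therefore gives, in each of the twelve residue classes, a cubic polynomial in $m$ with the same leading term $288A\,m^3=192m^3$ but class-dependent lower-order coefficients; this is precisely \eqref{R4formula}. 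The polynomial part together with the $(-1)^k$ terms forces validity only for $n>16$, just as the $R_3$ formula required $n>9$.

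For the periodicity statement I would argue exactly as in the last paragraph of the proof of Theorem~\ref{R3thmexk}. For each fixed residue $a$, the formula \eqref{R4formula} exhibits $R_4(12m+a)$ as a polynomial in $m$ with integer coefficients, and any integer-coefficient polynomial $g$ satisfies $g(m+p)\equiv g(m)\pmod p$ since $(m+p)^k\equiv m^k\pmod p$ for every $k$ by the binomial theorem. Hence $R_4(12(m+p)+a)\equiv R_4(12m+a)\pmod p$ for each $a$, which is the asserted congruence $R_4(n+12p)\equiv R_4(n)\pmod p$.

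The main obstacle is purely computational: the degree-$26$ numerator makes the partial fraction decomposition considerably heavier than in the $R_3$ case, and the real work lies in correctly evaluating the residues at $q=-1$, at the primitive cube roots of unity and at $q=\pm i$, and then reassembling the period-$2$, period-$3$ and period-$4$ contributions into the twelve separate cubic polynomials without arithmetic slips. Checking the twelve cases of \eqref{R4formula} against the tabulated values of $R_4(n)$ in Table~\ref{table R} provides a useful consistency test.
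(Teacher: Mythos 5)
Your proposal is correct and follows essentially the same route as the paper: a partial fraction decomposition of $\mathscr{F}_4(q)$ into a degree-$16$ polynomial part plus simple fractions (the paper writes your $\frac{aq+b}{1+q+q^2}$ term equivalently as $-\frac{2(1-q^2)}{9(1-q^3)}$, and indeed obtains $\frac{2}{3(1-q)^4}$ as the leading piece, matching your residue $A=\tfrac{2}{3}$ and leading term $192m^3$), followed by binomial expansion, splitting the coefficients modulo $12$, and the same integer-coefficient-polynomial argument for periodicity. The only content you leave implicit --- the explicit residue values --- is exactly the computational work the paper carries out, so there is no gap.
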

	\begin{proof}
		From \eqref{ScrF4GF}, we have,
		\begin{align}
			&\sum_{n=1}^{\infty} R_4(n) q^n \\ &=\frac{q^{10}(1+4q+6q^2+7q^3+6q^4+2q^5-5q^7-5q^8-5q^9+q^{11}+3q^{12}+2q^{13}-q^{16})}{(1-q)^4(1+q)^2(1+q^2)(1+q+q^2)} \notag \\
			&= 3 -5q -5q^2 -7q^3 -4q^4 -3q^5 +2q^6 +3q^7 +5q^8 +5q^9 +4q^{10} +2q^{11} -q^{13} -2q^{14} -q^{15} -q^{16} \notag \\
			&\quad  - \frac{193}{18(1-q)} + \frac{965}{72(1-q)^2}- \frac{5}{(1-q)^3}+ \frac{2}{3(1-q)^4} - \frac{3}{2(1+q)} + \frac{1}{8(1+q)^2}+ \frac{1}{4(1+q^2)}- \frac{2(1-q^2)}{9(1-q^3)}. \label{4inter}
		\end{align}
		Using \eqref{binom} repeatedly with appropriate arguments, we get,
		\begin{align*}
			&-\frac{193}{18(1-q)} + \frac{965}{72(1-q)^2}- \frac{5}{(1-q)^3}+ \frac{2}{3(1-q)^4} - \frac{3}{2(1+q)} + \frac{1}{8(1+q)^2}+ \frac{1}{4(1+q^2)}- \frac{2(1-q^2)}{9(1-q^3)} \\
			&= -\frac{193}{18} \sum_{k=0}^{\infty} q^k  +\frac{965}{72} \sum_{k=0}^{\infty} (k+1) q^k -\frac{5}{2} \sum_{k=0}^{\infty} (k+2)(k+1) q^k +\frac{1}{9} \sum_{k=0}^{\infty} (k+3)(k+2)(k+1)q^k \\
			&\quad -\frac{3}{2} \sum_{k=0}^{\infty} (-1)^k q^k +\frac{1}{8} \sum_{k=0}^{\infty} (-1)^k (k+1) q^k +\frac{1}{4} \sum_{k=0}^{\infty} (-1)^k q^{2k} -\frac{2}{9} (1-q^2) \sum_{k=0}^{\infty} q^{3k} \\
			&= \sum_{k=0}^{\infty} \left( -\frac{193}{18}  +\frac{965}{72} (k+1) -\frac{5}{2}  (k+2)(k+1) +\frac{1}{9} (k+3)(k+2)(k+1)  -\frac{3}{2} (-1)^k +\frac{1}{8} (-1)^k (k+1) \right) q^k \\
			&\quad  +\frac{1}{4} \sum_{k=0}^{\infty} (-1)^k q^{2k} -\frac{2}{9} (1-q^2) \sum_{k=0}^{\infty} q^{3k} \\
			&= \sum_{k=0}^{\infty} \frac{1}{72} \bigg(8k^3 -132 k^2- (-1)^k 99+ 9(57+(-1)^k)k   -119\bigg) q^k  +\frac{1}{4} \sum_{k=0}^{\infty} (-1)^k q^{2k} -\frac{2}{9} (1-q^2) \sum_{k=0}^{\infty} q^{3k}.
		\end{align*}
		Similarly, as done in Theorem \ref{R3thmexk}, split the sums on the right-hand side of the above equation modulo 12 and simplify to get the required result \eqref{R4formula}.	 
		Further, from \eqref{R4formula}, for $m\geq2$, we have seen that $R_4(12m) = 	192m^3 -264m^2+87m-3$ is a polynomial and hence $R_4(12(m+p)) \equiv R_4(12m) \textup{ mod (p)}$. This same argument can be used for the other cases to show that $R_4(n) \textup{ mod (p)}$ is a periodic function in $n$ with a period $12p$.
	\end{proof}
	\begin{corollary}\label{CorR4G}
		As $n \to \infty$, we have,
		\begin{align*}
			R_4(n) \sim \frac{1}{9}n^3.
		\end{align*}
	\end{corollary}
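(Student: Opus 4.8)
The plan is to read off the asymptotic directly from the explicit formula of Theorem \ref{R4thmexk}, mirroring the argument used for Corollary \ref{CorR3G}. The key structural observation is that in every one of the twelve congruence classes $n = 12m + a$ with $0 \le a \le 11$, the cubic polynomial in $m$ representing $R_4(12m+a)$ carries the \emph{same} leading coefficient, namely $192$; the classes differ only in the coefficients of $m^2$, $m$, and the constant term. Consequently the leading-order behaviour is independent of the residue, and this is exactly what makes a single clean asymptotic possible.

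First I would fix a residue $a$ and write $n = 12m + a$, so that $m = (n-a)/12$ and $m \to \infty$ if and only if $n \to \infty$ through that class. From \eqref{R4formula} we have $R_4(n) = 192 m^3 + O(m^2)$ in each class, where the $O(m^2)$ term collects the quadratic, linear, and constant contributions, which depend on $a$ but are uniformly bounded across the twelve classes. Substituting $m = (n-a)/12$ and using $n = 12m + a \sim 12m$, I would compute
\begin{align*}
R_4(n) = 192 m^3 + O(m^2) = 192 \left( \frac{n-a}{12} \right)^3 + O(n^2) = \frac{n^3}{9} + O(n^2),
\end{align*}
since $192/12^3 = 192/1728 = 1/9$. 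Dividing by $n^3/9$ and letting $n \to \infty$ along the class shows the ratio tends to $1$, so $R_4(n) \sim \tfrac{1}{9}n^3$ within each residue class.

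Finally, since all twelve residue classes produce the identical leading term $\tfrac{1}{9}n^3$, the asymptotic holds as $n \to \infty$ with no restriction on the residue, yielding $R_4(n) \sim \tfrac{1}{9}n^3$. There is no real obstacle here: the substantive work was already carried out in establishing the exact formula \eqref{R4formula} in Theorem \ref{R4thmexk}, and the corollary is an immediate consequence. The only point requiring a moment of care is the uniformity claim, that the twelve distinct sub-leading polynomials are all $O(n^2)$ so that their discrepancies are negligible against the common cubic main term; this is transparent from the explicit formula, since each of the twelve lower-order polynomials has degree at most two in $m$ and hence at most two in $n$.
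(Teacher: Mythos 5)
Your proposal is correct and follows essentially the same route as the paper: substitute $m=(n-a)/12$ into the exact formula of Theorem \ref{R4thmexk}, note that all twelve residue classes share the leading coefficient $192$, and conclude from $192/12^3=1/9$ that every class yields the same leading term $\tfrac{1}{9}n^3$. Your explicit attention to the uniformity of the $O(n^2)$ error across the classes is a slightly more careful write-up of the paper's one-line justification, but it is the same argument.
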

	\begin{proof}
		From Theorem \ref{R4thmexk}, we have,
		\begin{align*}
			R_4(12m) =192m^3 - 264m^2 + 87m -3.
		\end{align*}
		Replace $m$ by $\frac{n}{12}$ in the above equation to get the required result for the case $n=12m$. Then, repeat the above step for the other congruency classes modulo $12$ in Theorem \ref{R4thmexk} accordingly. Hence, $R_4(n)$ grows like $\frac{1}{9}n^3$ asymptotically, since, all the congruence classes have the identical asymptotic growth. 
	\end{proof}
	\subsection{Parity and Congruences}\label{SecPar}
	In this Section, we study the parity of $R_k(n)$ for $k=3$, $4$ and $5$ using their generating functions. The same results can be obtained using their exact formulas too, which could be hard to handle due to their nature. We obtain the nature of parity of $R_5(n)$ without discovering its exact formula.
	\begin{theorem}\label{R3CTh}
		For $n>9$, we have,
		\begin{align*}
			R_3(n) \equiv 
			\begin{cases}
				0 \ (\textup{mod } 2), & \textup{if $n$ is odd,}\\
				1 \ (\textup{mod } 2), & \textup{if $n$ is even.}
			\end{cases}
		\end{align*}
	\end{theorem}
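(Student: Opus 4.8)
The plan is to work entirely with the generating function $\mathscr{F}_3(q)$ from \eqref{ScrF3GF} and to reduce it modulo $2$ into a transparent rational function. First I would reduce the numerator and denominator of \eqref{ScrF3GF} modulo $2$: each coefficient $\pm 1$ becomes $1$ and each $2$ becomes $0$, so the numerator $q^6(1+2q+q^2+2q^3-q^4-q^6-q^7+q^8)$ becomes $q^6(1+q^2+q^4+q^6+q^7+q^8)$. For the denominator I would first rewrite $(1-q)^3(1+q+q^2)=(1-q)^2(1-q^3)$ using $(1-q)(1+q+q^2)=1-q^3$, and then apply the Frobenius identity over $\mathbb{F}_2$, namely $(1-q)^2\equiv 1+q^2\pmod 2$, to get $(1-q)^2(1-q^3)\equiv (1+q^2)(1+q^3)\pmod 2$. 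This yields
\begin{align*}
\mathscr{F}_3(q)\equiv \frac{q^6(1+q^2+q^4+q^6+q^7+q^8)}{(1+q^2)(1+q^3)}\pmod 2.
\end{align*}

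The key step is to recognize that this collapses to a series with eventually periodic coefficients. I would subtract the guessed tail $\frac{q^{10}}{1-q^2}\equiv \frac{q^{10}}{1+q^2}\pmod 2$ and combine over the common denominator $(1+q^2)(1+q^3)$. Working modulo $2$, where subtraction is addition, the numerator simplifies to $q^6+q^8+q^{12}+q^{14}=q^6(1+q^2)(1+q^6)$; here I cancel the factor $(1+q^2)$ and then use the second Frobenius identity $1+q^6\equiv (1+q^3)^2\pmod 2$ to cancel $(1+q^3)$ as well. This leaves
\begin{align*}
\mathscr{F}_3(q)-\frac{q^{10}}{1-q^2}\equiv q^6(1+q^3)=q^6+q^9\pmod 2,
\end{align*}
so that $\mathscr{F}_3(q)\equiv q^6+q^9+\frac{q^{10}}{1-q^2}\pmod 2$.

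Finally I would read off the coefficients. Since $\frac{q^{10}}{1-q^2}=\sum_{j\geq 5}q^{2j}$ contributes $1$ to every even power $\geq 10$ and $0$ to every odd power, while the correction $q^6+q^9$ only affects exponents $\leq 9$, the coefficient of $q^n$ for $n>9$ is $1$ when $n$ is even and $0$ when $n$ is odd. By the definition of congruence of generating functions adopted in Section \ref{SecExa}, this is exactly the assertion that $R_3(n)\equiv 1\pmod 2$ for even $n>9$ and $R_3(n)\equiv 0\pmod 2$ for odd $n>9$, which completes the proof.

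The only real obstacle is the algebraic bookkeeping: one must spot the two $\mathbb{F}_2$ collapses $(1-q)^2\equiv 1+q^2$ and $1+q^6\equiv (1+q^3)^2$ that reduce the messy rational function to an eventually-periodic series, and guess the correct tail $\frac{q^{10}}{1-q^2}$ (which is readily suggested by the small values of $R_3(n)$ in Table \ref{table R}); once this is done the verification is a one-line cancellation. As a consistency check and an independent route, the same parities follow directly from the exact formula \eqref{R3formula}: reducing each of the three branches modulo $2$ and comparing the parity of $m$ with that of $n=3m+a$ gives the identical conclusion.
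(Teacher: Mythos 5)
Your proposal is correct and follows essentially the same route as the paper: both reduce $\mathscr{F}_3(q)$ modulo $2$ using $(1-q)(1+q+q^2)=1-q^3$ and the Frobenius-type identities $(1-q)^2\equiv 1-q^2$, $1+q^6\equiv(1+q^3)^2 \pmod 2$, arriving at a rational function with denominator $1-q^2$ (your $q^6+q^9+\frac{q^{10}}{1-q^2}$ is identical mod $2$ to the paper's $\frac{q^6+q^8+q^9+q^{10}+q^{11}}{1-q^2}$) and reading off the eventually periodic parities. The only cosmetic difference is that you subtract a guessed tail and cancel, while the paper splits the numerator into pieces divisible by $1-q^3$; the conclusion and method are the same.
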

	\begin{proof}
		From \eqref{ScrF3GF}, we know,
		\begin{align*}
			\sum_{n=1}^{\infty} R_3(n) q^n &= \frac{q^6(1+2q+q^2+2q^3-q^4-q^6-q^7+q^8)}{(1-q)^3(1+q+q^2)} \\
			&\equiv  \frac{q^6(1+q^2+q^4+q^6+q^7+q^8)}{(1-q)^2(1-q^3)} \quad \textup{(mod 2)} \\
			&\equiv  \frac{q^6(1-q^6)}{(1-q)^2(1-q^3)} + \frac{q^6(q^2-q^8)}{(1-q)^2(1-q^3)}+ \frac{q^6(q^4-q^7)}{(1-q)^2(1-q^3)}  \quad \textup{(mod 2)} \\
			&\equiv  \frac{q^6(1+q^3)}{(1-q)^2} + \frac{q^8(1+q^3)}{(1-q)^2}+ \frac{q^{10}}{(1-q)^2}  \quad \textup{(mod 2)} \\
			&\equiv \frac{q^6+q^9+q^8+q^{10}+q^{11}}{1-q^2} \quad \textup{(mod 2)},
		\end{align*}
		since $\frac{1}{(1-q)^2} \equiv \frac{1}{(1-q^2)}$ (mod 2). For an even $n$, $q^6, q^8$ and $q^{10}$  contribute one each to the parity of $R_3(n)$ to make it odd. For an odd $n$, $q^9$ and $q^{11}$ contribute one each to the parity of $R_3(n)$ to make it even.
	\end{proof}
	\begin{theorem}\label{R4CTh}
		For $n>16$, we have,
		\begin{align*}
			R_4(n)\equiv
			\begin{cases}
				0 \ (\textup{mod } 2), & \textup{if $n \equiv 4,6$ (mod 8)}, \\
				1 \ (\textup{mod } 2), & \textup{if $n \equiv 0,1,2,3,5,7$ (mod 8)}.
			\end{cases}
		\end{align*}
	\end{theorem}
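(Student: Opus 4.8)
The plan is to follow the strategy of Theorem~\ref{R3CTh}: reduce the rational generating function \eqref{ScrF4GF} modulo $2$ to a single fraction with denominator $1-q^8$, and then read the parity of $R_4(n)$ directly off the numerator exponents modulo $8$. First I would reduce the numerator of \eqref{ScrF4GF} coefficientwise modulo $2$, obtaining
\[
\mathscr{F}_4(q) \equiv \frac{q^{10}\,N(q)}{(1-q)(1-q^2)(1-q^3)(1-q^4)} \pmod 2, \qquad N(q):=1+q^3+q^7+q^8+q^9+q^{11}+q^{12}+q^{16}.
\]
For the denominator I would use the Frobenius congruences $1-q^2\equiv(1-q)^2$ and $1-q^4\equiv(1-q)^4 \pmod 2$ together with $1-q^3=(1-q)(1+q+q^2)$ to collapse it to $(1-q)^8(1+q+q^2)\equiv(1-q^8)(1+q+q^2)\pmod 2$.

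The crucial step is to observe that the cubic factor $1+q+q^2$ also divides the numerator $N(q)$ over $\mathbb{F}_2$. I would verify this by the root test: writing $\omega$ for a primitive cube root of unity (so $\omega^3=1$ and $1+\omega+\omega^2=0$ over $\mathbb{F}_2$) and reducing each exponent of $N$ modulo $3$ gives $N(\omega)=0$, whence $(1+q+q^2)\mid N(q)$. Carrying out the division over $\mathbb{F}_2$ yields the quotient $M(q)=1+q+q^7+q^{11}+q^{13}+q^{14}$, and therefore
\[
\mathscr{F}_4(q)\equiv\frac{q^{10}M(q)}{1-q^8}=\frac{q^{10}+q^{11}+q^{17}+q^{21}+q^{23}+q^{24}}{1-q^8}\pmod 2.
\]

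Finally, expanding $\tfrac{1}{1-q^8}=\sum_{j\ge0}q^{8j}$ shows that each numerator term $q^a$ contributes $1$ to $R_4(n)\bmod 2$ for every $n\equiv a\pmod 8$ with $n\ge a$. The six exponents $10,11,17,21,23,24$ reduce modulo $8$ to the distinct residues $2,3,1,5,7,0$, so precisely the classes $n\equiv 0,1,2,3,5,7\pmod 8$ receive an odd contribution while $n\equiv 4,6\pmod 8$ receive none. Since the largest numerator exponent is $24$, and within each residue class the smallest admissible $n>16$ is already at least the corresponding exponent (the borderline cases being $n=24$ for $n\equiv0$, $n=21$ for $n\equiv5$, and $n=23$ for $n\equiv7$), the stated parity is valid for all $n>16$.

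I expect the main obstacle to be the cancellation of $1+q+q^2$ in the middle step: it is exactly this cancellation that drops the apparent period $\operatorname{lcm}(3,8)=24$, coming from the surviving factors $(1+q+q^2)$ and $(1-q^8)$, down to the period $8$ asserted in the statement. Everything else is the routine modulo-$2$ bookkeeping already illustrated in Theorem~\ref{R3CTh}.
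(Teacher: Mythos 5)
Your proposal is correct and follows essentially the same route as the paper: both reduce $\mathscr{F}_4(q)$ modulo $2$, cancel the factor $1+q+q^2$ against the denominator, and arrive at the identical expression $\frac{q^{10}+q^{11}+q^{17}+q^{21}+q^{23}+q^{24}}{1-q^8} \pmod{2}$, from which the parity in each residue class modulo $8$ is read off. Your use of the Frobenius congruence $(1-q)^8 \equiv 1-q^8 \pmod{2}$ and explicit polynomial division over $\mathbb{F}_2$ is merely a tidier organization of the paper's term-by-term cancellations (and your check of the borderline exponents $17$, $21$, $23$, $24$ makes the threshold $n>16$ more explicit than the paper does).
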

	\begin{proof}
		From \eqref{ScrF4GF}, we have,
		\begin{align*}
			&\sum_{n=1}^{\infty} R_4(n) q^n \notag \\
			&= \frac{q^{10}(1+4q+6q^2+7q^3+6q^4+2q^5-5q^7-5q^8-5q^9+q^{11}+3q^{12}+2q^{13}-q^{16})}{(1-q)^4(1+q)^2(1+q^2)(1+q+q^2)} \\
			&\equiv \frac{q^{10}+q^{13}+q^{17}+q^{18}+q^{19}+q^{21}+q^{22}+q^{26}}{(1-q)^4(1+q)^2(1+q^2)(1+q+q^2)}   \quad \textup{(mod 2)} \\
			&\equiv \frac{q^{10}+q^{13}+q^{17}+q^{18}+q^{19}+q^{21}+q^{22}+q^{26}}{(1-q)^3(1+q)^2(1+q^2)(1-q^3)}   \quad \textup{(mod 2)} \\
			&\equiv \frac{q^{10}(1-q^3)+q^{17}(1-q^3)+q^{18}(1-q^3)+q^{19}(1-q^3)+q^{20}(1-q^3)+q^{23}(1-q^3)}{(1-q)^3(1+q)^2(1+q^2)(1-q^3)}   \quad \textup{(mod 2)} \\
			&\equiv \frac{q^{10}+q^{17}+q^{18}+q^{19}+q^{20}+q^{23}}{(1-q)^2(1+q)(1-q^2)(1+q^2)}  \quad \textup{(mod 2)} \\
			&\equiv \frac{q^{10}+q^{17}+q^{18}+q^{19}+q^{20}+q^{23}}{(1-q)^2(1+q)(1-q^4)}  \quad \textup{(mod 2)}.
		\end{align*}
		Use the fact $\frac{1}{1-q^m} \equiv \frac{1}{1+q^m}$ (mod 2) for any $m \in \mathbb{N}$ and then multiply both the numerator of the right-hand side by $1+q$ in the next step to get,
		\begin{align*}
			\sum_{n=1}^{\infty} R_4(n) q^n &\equiv \frac{q^{10}+q^{17}+q^{18}+q^{19}+q^{20}+q^{23}}{(1-q)^2(1+q)(1+q^4)}  \quad \textup{(mod 2)} \\
			&\equiv \frac{(q^{10}+q^{17}+q^{18}+q^{19}+q^{20}+q^{23})(1+q)}{(1-q)(1+q)(1+q^2)(1+q^4)}  \quad \textup{(mod 2)} \\
			&\equiv \frac{q^{10}+q^{11}+q^{17}+q^{18}+q^{18}+q^{19}+q^{19}+q^{20}+q^{20}+q^{21}+q^{23}+q^{24}}{1-q^8} \quad \textup{(mod 2)} \\
			&\equiv \frac{q^{10}+q^{11}+q^{17}+q^{21}+q^{23}+q^{24}}{1-q^8} \quad \textup{(mod 2)}.
		\end{align*}
		For $n \equiv 0,1,2,3,5,7$ (mod 8) and $n>16$, the terms $q^{24}$, $q^{17}$, $q^{10}$, $q^{11}$, $q^{21}$, and $q^{23}$ contribute one each to the parity of $R_4(n)$ respectively, to make it odd. For $n \equiv 4,6$ (mod 8) and $n>16$, $R_4(n)$ are even since they do not appear on the right-hand side of the last equation.
	\end{proof}
	\begin{remark}
		For $n>16$ and $odd$, $R_4(n)$ is always odd. This is evident from the observation in Theorem \eqref{R4CTh}, that $R_4(n)$ is odd for all the four cases $n \equiv 1,3,5,7$ \textup{(mod 8)}.
	\end{remark}
	\begin{theorem}\label{R5CTh}
		For $n>25$, we have,
		\begin{align*}
			R_5(n)\equiv
			\begin{cases}
				0 \ (\textup{mod } 2), & \textup{if $n \equiv 5,7$ (mod 8)}, \\
				1 \ (\textup{mod } 2), & \textup{if $n \equiv 0,1,2,3,4,6$ (mod 8)}.
			\end{cases}
		\end{align*}
	\end{theorem}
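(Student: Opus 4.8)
The plan is to follow the method of Theorems \ref{R3CTh} and \ref{R4CTh}: reduce the generating function $\mathscr{F}_5(q)$ of \eqref{ScrF5GF} modulo $2$, cancel the odd cyclotomic factors from the denominator, and arrive at an expression of the form $\frac{P(q)}{1-q^8}$, from which the parity of $R_5(n)$ can be read off according to $n \bmod 8$.

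The first step is to simplify the denominator modulo $2$. Since $-1\equiv 1\ \textup{(mod 2)}$, we have $(1-q)^2\equiv 1+q^2$, $1-q^4\equiv (1+q)^4$, $1-q^3\equiv (1+q)(1+q+q^2)$ and $1-q^5\equiv (1+q)(1+q+q^2+q^3+q^4)$, so the denominator $(1-q)^2(1-q^3)(1-q^4)(1-q^5)$ reduces to
\[
(1+q)^8\,(1+q+q^2)(1+q+q^2+q^3+q^4)\ \textup{(mod 2)}.
\]
The crucial point is that $(1+q)^8\equiv 1+q^8\ \textup{(mod 2)}$, because every intermediate binomial coefficient $\binom{8}{k}$ with $0<k<8$ is even; this is exactly what forces the period $8$. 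It therefore suffices to show that the reduced numerator is divisible by $D(q):=(1+q+q^2)(1+q+q^2+q^3+q^4)\equiv 1+q^2+q^3+q^4+q^6\ \textup{(mod 2)}$, so that this factor cancels.

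The computational heart of the argument is the numerator. Discarding the even coefficients in the bracket of \eqref{ScrF5GF} leaves $1+q^7+q^8+q^{10}+q^{11}+q^{15}+q^{19}+q^{20}+q^{21}+q^{22}+q^{23}+q^{24}$ modulo $2$, and I would divide this by $D(q)$ in $\mathbb{F}_2[q]$. I expect the division to be exact, with quotient $Q(q)\equiv 1+q^2+q^3+q^6+q^8+q^{10}+q^{12}+q^{13}+q^{14}+q^{15}+q^{17}+q^{18}\ \textup{(mod 2)}$; verifying this exact divisibility, together with the bookkeeping of the degree-$24$ numerator, is the main obstacle. Cancelling $D(q)$ then gives
\[
\mathscr{F}_5(q)\equiv \frac{q^{15}Q(q)}{1-q^8}\ \textup{(mod 2)}.
\]

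Finally, expanding $\frac{1}{1-q^8}=\sum_{k\ge0}q^{8k}$ and multiplying out $q^{15}Q(q)$ produces the numerator exponents $\{15,17,18,21,23,25,27,28,29,30,32,33\}$, which I would tally modulo $8$. The coefficient of $q^n$ for $n$ in a fixed residue class $r$ counts, modulo $2$, the exponents $e$ in this set with $e\equiv r\ \textup{(mod 8)}$ and $e\le n$; once $n$ is large enough that every such $e$ is at most $n$, the count stabilises. The classes $r\equiv 5,7$ each contain an even number of these exponents while all other classes contain an odd number, giving precisely the stated dichotomy. The threshold $n>25$ comes from checking that all relevant exponents lie at or below $n$ for every $n\ge 26$; indeed the pattern first fails at $n=25$, where the class $r\equiv 1$ is still short the exponent $33$.
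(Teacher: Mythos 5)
Your proposal is correct and takes essentially the same approach as the paper: both reduce $\mathscr{F}_5(q)$ modulo $2$ to the same final expression $\frac{q^{15}+q^{17}+q^{18}+q^{21}+q^{23}+q^{25}+q^{27}+q^{28}+q^{29}+q^{30}+q^{32}+q^{33}}{1-q^8}$ and then tally the numerator exponents by residue class modulo $8$, with the identical threshold analysis explaining why $n=25$ fails and $n>25$ suffices. The only difference is bookkeeping: you cancel the odd factors $(1+q+q^2)(1+q+q^2+q^3+q^4)$ by a single exact division in $\mathbb{F}_2[q]$ (your claimed quotient $Q(q)$ does multiply out to the reduced numerator, and $(1+q)^8\equiv 1+q^8$ correctly isolates the period), whereas the paper achieves the same cancellation stepwise through telescoping identities such as $1-q^{15}=(1-q^5)(1+q^5+q^{10})$ and $(1-q)(1+q)(1+q^2)(1+q^4)=1-q^8$.
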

		\begin{proof}
			From \eqref{ScrF5GF}, taking modulo $2$ of both the sides, we get,
			\begin{align*}
				&\sum_{n=1}^{\infty} R_5(n)q^n \\
				&\equiv \frac{q^{15}(1+q^7+q^8+q^{10}+q^{11}+q^{15}+q^{19}+q^{20}+q^{21}+q^{22}+q^{23}+q^{24})}{(1-q)^2(1-q^3)(1-q^4)(1-q^5)} \quad \textup{(mod 2)} \\
				&\equiv \frac{q^{15}((1-q^{15})+q^{5}(1-q^{5})+q^{7}(1-q^{5})+q^{8}(1-q^{5})+q^{11}(1-q^{5})+q^{12}(1-q^{5}))}{(1-q)^2(1-q^3)(1-q^4)(1-q^5)} \\
				&\quad + \frac{q^{15}(q^{13}(1-q^{5})+q^{15}(1-q^{5})+q^{16}(1-q^{5})+q^{17}(1-q^{5})+q^{18}(1-q^{5})+q^{19}(1-q^{5}))}{(1-q)^2(1-q^3)(1-q^4)(1-q^5)}  \quad \textup{(mod 2)} \\
				&\equiv \frac{q^{15}(1+q^5+q^7+q^8+q^{11}+q^{12}+q^{13}+q^{15}+q^{16}+q^{17}+q^{18}+q^{19})}{(1-q)^2(1-q^3)(1-q^4)} \quad \textup{(mod 2)}.
			\end{align*}
			Use the facts $\frac{1}{(1-q)} \equiv \frac{1}{(1+q)} \textup{ (mod 2)}$ and $\frac{1}{(1-q^4)} \equiv \frac{1}{(1+q^4)} \textup{ (mod 2)}$ and multiply the numerator and denominator by $(1+q^2)$ to get,
			\begin{align*}
				&\sum_{n=1}^{\infty} R_5(n)q^n \\
				&\equiv \frac{q^{15}(1+q^5+q^7+q^8+q^{11}+q^{12}+q^{13}+q^{15}+q^{16}+q^{17}+q^{18}+q^{19})(1+q^2)}{(1-q)(1+q)(1-q^3)(1+q^4)(1+q^2)} \quad \textup{(mod 2)} \\
				&\equiv \frac{q^{15}(1+q^2+q^5+q^8+q^9+q^{10}+q^{11}+q^{12}+q^{14}+q^{16}+q^{20}+q^{21})}{(1-q^8)(1-q^3)} \quad \textup{(mod 2)} \\
				& \equiv \frac{q^{15}((1-q^9)+(q^2-q^8)+(q^5-q^{11})+(q^{10}-q^{16})+(q^{12}-q^{21})+(q^{14}-q^{20}))}{(1-q^8)(1-q^3)} \quad \textup{(mod 2)}.
			\end{align*} 
			We use the identities $(1-q^6)=(1-q^3)(1+q^3)$ and $(1-q^9)=(1-q^3)(1+q^3+q^6)$ accordingly and simplify to get,
			\begin{align*}
				\sum_{n=1}^{\infty} R_5(n)q^n &\equiv \frac{1}{1-q^8}\left( q^{15}+q^{17}+q^{18}+q^{21}+q^{23}+q^{25}+q^{27}+q^{28}+q^{29}+q^{30}+q^{32}+q^{33} \right).
			\end{align*}
			For $n \equiv 0,2,3,4,6$ (mod 8) and $n>25$, the terms $q^{32}$, $q^{18}$, $q^{27}$, $q^{28}$ and $q^{30}$ contribute one each to the parity of $R_4(n)$ respectively, to make it odd. For $n \equiv 1$ (mod 8) and $n>25$, the term $q^{17}+q^{25}+q^{33}$ contributes three to the parity of $R_5(n)$, to make it odd. For $n \equiv 5,7$ (mod 8) and $n>25$, $q^{21}+q^{29}$ and $q^{15}+q^{23}$ contribute two each to the parity of $R_5(n)$, to make it even.
		\end{proof}
		\begin{remark}
			For $n>25$ and $even$, $R_5(n)$ is always odd. This is evident from the observation in Theorem \eqref{R5CTh}, that $R_5(n)$ is odd for all the four cases $n \equiv 0,2,4,6$ \textup{(mod 8)}.
		\end{remark}
	\section{Concluding remarks}
	We end the paper with the following concluding remarks:
	\begin{enumerate}
		\item Unlike the generating function of partitions with a size $k$ Durfee square $\mathscr{G}_k(q)$, the generating function of partitions with a size $k$ Durfee triangle $\mathscr{F}_k(q)$ seems to be non-trivial as can be seen from the increasing complexities of $\mathscr{F}_3(q), \mathscr{F}_4(q)$ and $\mathscr{F}_5(q)$, as shown in Section \ref{SectionGF}. Armin Straub and the author address this question in an upcoming paper \cite{DurfeeTri}.
		\item It is very curious and unclear as to why the parity congruences for $R_k(n)$ seem to hold for $n>k^2$. This suggests a \textit{combinatorial connection} between the Durfee triangle and the Durfee square of a partition.
		\item It should be possible to obtain the recurrence relations for $R_k(n)$ in general like \eqref{R2Rec} and \eqref{R3Rec}.
		\item The parity of $P(n)$, which is largely unexplored combinatorially, could be looked at using the parity of $R_k(n)$.
		\item The parity bias of $R_2(n)$ is same as that of $R_3(n)$, while the parity bias of $R_4(n)$ is same as that of $R_5(n)$. It is not evident why this is the case.
		\item From the initial few cases, it seems like the following conjecture holds true.
		\begin{conjecture}
			For any fixed $k \in \mathbb{N}$, as $n \to \infty$,
				\begin{align*}
					R_k(n) \sim c_k n^{k-1},
				\end{align*}
				for some constant $c_k$.
		\end{conjecture}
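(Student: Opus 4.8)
The plan is to read off the asymptotics from the analytic structure of the generating function $\mathscr{F}_k(q)$ rather than from an exact formula, since such formulas become intractable as $k$ grows. The governing principle is the standard transfer between poles of a rational function and the polynomial growth of its coefficients: if a rational function has a pole of order $d$ at $q=1$ and poles of strictly smaller order at every other point of the unit circle, then its $n$-th coefficient is asymptotic to a constant times $n^{d-1}$. So the whole problem reduces to locating the maximal pole of $\mathscr{F}_k(q)$.

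First I would establish that $\mathscr{F}_k(q)$ is a rational function all of whose poles lie on the unit circle, its denominator being a product of factors of the form $(1-q^{j})$. This is explicit in Theorems \ref{F3T}, \ref{F4T} and \ref{F5T} for $k=3,4,5$, and in general it should follow from a uniform combinatorial dissection of the partitions with Durfee triangle of size exactly $k$: the triangular core of $t_k$ nodes is fixed, and the remaining cells are organised into $k$ nested diagonal strips, each contributing a geometric-type summation, exactly as in the type A/B/C decompositions used above. This structural description is the question taken up in the forthcoming work \cite{DurfeeTri}.

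The key step is to show that the pole of $\mathscr{F}_k(q)$ at $q=1$ has order exactly $k$, while every other root of unity contributes a pole of strictly smaller order. Combinatorially, the $k$ strips outside the triangle are governed by one unbounded outer summation together with $k-1$ inner summations tied by nested ``$+1$'' inequalities (such as $\ell\le m+1$, $r\le\ell+1$, $s\le r+1$ in the figures). In the leading part at $q=1$ each of these $k$ summations contributes a simple factor $(1-q)^{-1}$, producing a pole of total order $k$; the shifted terms $q^{2s}, q^{3s},\dots$ created by the nested bounds only lower the order at $q=1$, and away from $q=1$ they never recombine to reach order $k$, so all other roots of unity have strictly smaller order. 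One verifies directly that for $k=3,4,5$ the denominators $(1-q)^2(1-q^3)$, $(1-q)(1-q^2)(1-q^3)(1-q^4)$ and $(1-q)^2(1-q^3)(1-q^4)(1-q^5)$ each have a pole of order exactly $k$ at $q=1$ and order at most $2$ at every other root of unity.

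Granting this, the conclusion is routine. Writing $A_k:=\lim_{q\to1}(1-q)^k\mathscr{F}_k(q)$, a partial-fraction expansion gives $\mathscr{F}_k(q)=A_k(1-q)^{-k}+(\textup{terms with poles of order}<k)$, and the transfer theorem yields $R_k(n)=\frac{A_k}{(k-1)!}\,n^{k-1}+O(n^{k-2})$, so $R_k(n)\sim c_k n^{k-1}$ with $c_k=\frac{A_k}{(k-1)!}>0$. Since $q=1$ is the unique pole of maximal order, the contributions of the remaining roots of unity are absorbed into the error term, which is exactly why all residue classes in Theorems \ref{R3thmexk} and \ref{R4thmexk} share the same leading coefficient; indeed $A_3=\tfrac{4}{3}$ gives $c_3=\tfrac{2}{3}$ and $A_4=\tfrac{2}{3}$ gives $c_4=\tfrac{1}{9}$. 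The main obstacle is precisely the key step: proving for all $k$ that the pole order at $q=1$ is exactly $k$ and that no other root of unity attains this order requires the uniform generating function for arbitrary $k$ together with enough control of its denominator, which is the content of \cite{DurfeeTri}.
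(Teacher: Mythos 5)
You should note first that this statement is a \emph{conjecture} in the paper: the author proves it only for $k=3,4$ (Theorems \ref{R3thmexk}, \ref{R4thmexk} and Corollaries \ref{CorR3G}, \ref{CorR4G}), so there is no proof of the general statement to compare against. Your transfer-theorem framing is the natural one and is consistent with what the paper actually does in the known cases: your $A_3=\tfrac{4}{3}$ and $A_4=\tfrac{2}{3}$ are exactly the coefficients of $(1-q)^{-3}$ and $(1-q)^{-4}$ in the paper's partial-fraction expansions \eqref{3inter} and \eqref{4inter}, and they reproduce $c_3=\tfrac{2}{3}$ and $c_4=\tfrac{1}{9}$.

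However, as a proof of the conjecture your proposal has a genuine gap, and it sits precisely at what you call the key step. (i) You have no uniform construction of $\mathscr{F}_k(q)$ as a rational function with denominator a product of factors $(1-q^j)$ for all $k$: the paper's dissection uses $2$ types for $k=3$ and $3$ types for $k=4,5$, each with conjugates and ad hoc inclusion--exclusion for overlaps, and this scheme visibly grows in complexity with $k$; asserting that it ``should follow'' from nested strips is a heuristic, not an argument, and is exactly what is deferred to the unpublished \cite{DurfeeTri}. (ii) Even granting rationality, your claim that the pole at $q=1$ has order exactly $k$ does not follow from ``each of the $k$ summations contributes $(1-q)^{-1}$,'' because $\mathscr{F}_k(q)$ is a \emph{signed} combination of type generating functions minus overlap corrections: a priori the leading coefficient $A_k$ could cancel to $0$, and you never show $A_k>0$. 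This is repairable without the full structure theory --- a single explicit family (e.g.\ the type-A partitions) gives $R_k(n)\gg n^{k-1}$, and an elementary counting argument gives $R_k(n)=O(n^{k-1})$, since a partition whose diagram omits the size-$(k+1)$ staircase has some index $j$ with $\lambda_j\le k+1-j$ and hence splits into $j-1$ unrestricted parts plus a partition into parts at most $k+1-j$ --- but such bounds only sandwich the growth rate; the existence of the asymptotic constant $c_k$ still needs the pole analysis. (iii) The assertion that no other root of unity attains order $k$ is verified only post hoc for $k\le 5$, with no general mechanism. In short: your proposal is a sound \emph{reduction} of the conjecture to a structural lemma about $\mathscr{F}_k(q)$, matching the method the paper uses for $k=3,4$, but the lemma is assumed rather than proved, so the conjecture remains open under your argument --- which, to your credit, you acknowledge.
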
 
		\item The exact formula for $R_k(n)$ is the polynomial expression for the partition function $P(n)$ which has been of interest for a lot of reasons including the faster computability. One must investigate if there could be an alternative way to obtain them. 
	\end{enumerate}
	
	\section{Acknowledgment}
	The author thanks Atul Dixit for his continuous and honest support. The author thanks James Sellers very much for his insight to obtain the exact formulas in \ref{R3formula} and \ref{R4formula}. The author thanks Krishnaswami Alladi for informing the reference of James Haglund, including \cite{BHR}. The author thanks Aviral Srivastava and Rajat Gupta for general discussions. The author thanks Armin Straub for pointing out a calculation error in one of the results.

	The list of OEIS sequences discussed: \seqnum{A325168}, \seqnum{A325188}. 

\begin{thebibliography}{99}
	
	\bibitem{AndrewsBook}
	G.~E.~Andrews, \emph{The Theory of Partitions}, Addison-Wesley Pub. Co., New York, 1976. Reissued, Cambridge University Press, New York, 300 pp., 1998.
	
	
	\bibitem{oeis}
	C.~Barker, \emph{A325168 Number of integer partitions of n with origin-to-boundary graph-distance equal to 2}, OEIS, \textcolor{magenta}{\href{https://oeis.org/A325168}{https://oeis.org/A325168}}, 2019.
	
	\bibitem{ono}
	J.~H.~Bruinier and K.~Ono, \textit{Algebraic formulas for the coefficients of half-integral weight harmonic weak Maass forms}, Adv. Math. {\bf 246} (2013), 198--219.
	
	\bibitem{BHR}
	F.~Butler, J.~Haglund, and J.~B.~Remmel, \emph{Notes on rook polynomials}, unpublished manuscript, \textcolor{magenta}{\href{https://www2.math.upenn.edu/~jhaglund/books/rook.pdf}{https://www2.math.upenn.edu/~jhaglund/books/rook.pdf}}.
	
	
	\bibitem{HL}
	O.~J.~Heilmann and E.~H.~Lieb, \emph{Theory of monomer-dimer systems}, Comm. Math. Phys. \textbf{25} (1972), 190--232.
	
	\bibitem{oeis-2}
	A.~Howroyd, \emph{A325188 Regular triangle read by rows where T(n,k) is the number of integer partitions of n with origin-to-boundary graph-distance equal to k}, OEIS, \textcolor{magenta}{\href{https://oeis.org/A325188}{https://oeis.org/A325188}}, 2024.
	
	\bibitem{KapRio}
	I.~Kaplansky and J.~Riordan, \emph{The problem of the rooks and its applications}, Duke Math. J. \textbf{13} (1946), 259--268.
	
	\bibitem{merca1}
	M.~Merca and M.~D.~Schmidt, \textit{A partition identity related to Stanley's Theorem}, Amer. Math. Monthly \textbf{125} (2018) 929--933.
	
	\bibitem{merca2}
	M.~Merca and M.~D.~Schmidt, \textit{The partition function p(n) in terms of the classical M\"{o}bius function}, Ramanujan J. \textbf{49} (2019) 87--96.
	
	\bibitem{ono2}
	K.~Ono, \textit{Distribution of the partition function modulo $m$}, Ann. of Math. (2) {\bf 151} (2000), no.~1, 293--307.
	
	\bibitem{Riordan}
	J.~Riordan, \emph{Introduction to Combinatorial Analysis}, Wiley Publication, New York, 1958. Reissued, Dover Publications, New York, 258 pp., 2002.
	
	\bibitem{DurfeeTri}
	N.~G.~Sharan and A.~Straub, \emph{Partitions with Durfee triangles of fixed size}, preprint, ArXiv, 2025.
	
	\bibitem{temme}
	N.~M.~Temme, \emph{Special functions: An introduction to the classical functions of mathematical physics}, Wiley-Interscience Publication, New York, 1996.
	\end{thebibliography}
\end{document}